\crefname{hypothesis}{Hypothesis}{Hypotheses}
\Crefname{algocf}{Algorithm}{Algorithms}
\newcommand{\push}[2]{(#1)_{\#}#2}
\newcommand\proj[0]{\textbf{Proj}}
\newcommand{\id}{\mathop{}\mathopen{}\mathrm{id}}
\def\defeq{\mathrel{\ensurestackMath{\stackon[1pt]{=}{\scriptscriptstyle\Delta}}}}
\newcommand{\R}{\mathbb{R}} 
\renewcommand{\H}{\mathcal{H}}
\renewcommand{\P}{\mathbb{P}}
\title{Efficient and Exact Multimarginal Optimal Transport with Pairwise Costs\thanks{\monthyeardate\today.\funding{ONR MURI \#N00014-20-1-2595.}
}}
\author{ 
Bohan Zhou\footnotemark[2]\thanks{Department of Mathematics, Dartmouth College, Hanover, NH, USA (\email{Bohan.Zhou@Dartmouth.edu}).}
\and
Matthew Parno\footnotemark[2]\thanks{Department of Mathematics, Dartmouth College, Hanover, NH, USA (\email{Matthew.D.Parno@Dartmouth.edu})}
}
\begin{document}

\maketitle

\begin{abstract}
We address the numerical solution to multimarginal optimal transport (MMOT) with pairwise costs. MMOT, as a natural extension from the classical two-marginal optimal transport, has many important applications including image processing, density functional theory and machine learning, but lacks efficient and exact numerical methods. The popular entropy-regularized method may suffer numerical instability and blurring issues. Inspired by the back-and-forth method introduced by Jacobs and L\'{e}ger, we investigate MMOT problems with pairwise costs. We show that such problems have a graphical representation and leverage this structure to develop a new computationally gradient ascent algorithm to solve the dual formulation of such MMOT problems.  Our method produces accurate solutions which can be used for the regularization-free applications, including the computation of Wasserstein barycenters with high resolution imagery.
\end{abstract}

\begin{keywords}
  multimarginal optimal transport, optimal transport, Wasserstein barycenter, graphical structure.
\end{keywords}

\begin{MSCcodes}
49Q22, 65K10, 49M29, 49N15, 90C35.
\end{MSCcodes}

\section{Introduction}

Probability distributions are used throughout statistics, machine learning, and applied mathematics to model complex datasets and characterize uncertainty.  Quantitatively comparing distributions and identifying structure in the space of probability distributions are therefore fundamental components of many modern algorithms for data analysis.  Optimal transport (OT) provides a natural way of comparing two distributions by measuring how much effort is required to transform one distribution into another.  The solution of an OT problem provides both a distance, called the \textit{Wasserstein distance}, and a joint distribution, called the \textit{optimal coupling}, which describes the optimal mass allocation between marginal distributions. Multi-marginal optimal transport (MMOT), which is the focus of this work, provides a generalization of classic OT to problems with more than two marginal distributions.

The field of OT has existed since Monge in the late 18th century, but has reemerged over the last few decades as a powerful theoretical and computational tool in many areas.   
Applications can be found in fields as diverse as chemistry and materials science \cite{Peletier2009Partial, De2016Comparing, Xia2021Existence}, geophysics \cite{Yang2018Application,Parno2019Remote}, image processing \cite{Saumier2015OT, Benamou2015Iterative, Solomon2015Convolutional}, fluid dynamics \cite{Brenier1989LeastAction, Benamou2000Computational, Brenier2008Generalized}, and machine learning \cite{Frogner2015Learning, Courty2017Joint,Arjovsky2017Wasserstein,Genevay2018Learning, Caron2020Unsupervised}, to name just a few.  A key contributor to this surge is the development of efficient numerical methods for approximately solving OT problems. The concept of regularized optimal transport received renewed attention following \cite{Cuturi2013Sinkhorn}, where an entropy regularization term is added to the original optimal transport problem, to form a matrix scaling problem that can be efficiently solved with Sinkhorn iterations \cite{sinkhorn1967concerning}.  The result is an easy-to-compute, albeit approximate, solution to the optimal transport problem.  

Many refinements and extensions of regularized optimal transport have since been developed (e.g., \cite{Dessein2018Regularized,Cuturi2018Semidual,Schmitzer2019Stabilized}) including GPU-accelerated implementations \cite{Feydy2019Interpolating}, but the computational expense of these approaches can still become significant for small levels of entropic regularization.  This makes such approaches intractable on quantitative applications where accurate approximations of the unregularized optimal coupling are required.  For example, when the application demands maintaining the fluid dynamics interpretation of optimal transport \cite{Benamou2000Computational}, as it does in the sea ice velocity estimation problem of \cite{Parno2019Remote}. In addition, regularized formulations result in diffuse couplings that also cause blurring in image processing applications like barycentric interpolation \cite{Benamou2015Iterative}. A direct deblurring method like total variation reuglarizations may not recover ideal images \cite{Cuturi2018Semidual}.

The exact (unregularized) solution to OT is currently only feasible for certain subclasses of OT.  OT on discrete measures is fundamentally an assignment problem and can be formulated as linear programming (LP).  If the number of Dirac masses in the discrete measures is not too large, LP can be solved directly.    Semi-discrete OT, where one measure is discrete and the other is continuous, is also naturally cast as an finite dimensional optimization problem (e.g., \cite{Merigot2016Minimal,Kitagawa2019Convergence}) and can often be solved exactly. Continuous OT problems, where both measures admit densities with respect to the Lebesgue measure, admit a PDE formulation based on the Monge-Amp\`ere equation, which can be solved efficiently in 2 or 3 dimensions to obtain the OT solution \cite{Benamou2014Ampere}.   The authors of \cite{Jacobs2020BF} also provide an alternative method for OT with continuous distributions that can be represented on a uniform grid in $\mathbb{R}^d$. Their approach, called the ``back-and-forth method'' (BFM), lays the foundations for our MMOT solver and will be discussed in more detail in \Cref{subsec:gd}.

Similar to entropy regularized OT, there are regularized formulations of MMOT that admit approximate numerical solutions \cite{Benamou2015Iterative, Elvander2020MMOT, Haasler2021Tree}, following with complexity analysis \cite{Lin2022Complexity, Fan2022Complexity}. However, these approaches in general suffer the same numerical instability and blurring issues. Semidefinite relaxation to MMOT is proposed in \cite{Khoo2020Semidefinite} and provides as a lower bound to MMOT. The approach of \cite{Neufeld2022Numerical} provides a regularization-free alternative for approximately solving MMOT problems with controllable levels of sub-optimality.  However, the scalability of this approach to higher dimensional spaces with complex marginals is unclear. Authors of \cite{Altschuler2022Polynomial} also provide a LP-based polynomial-time algorithm to solve some MMOT with structure exactly, and graphical structure is one of them. They use the ellipsoid algorithm with an oracle related with $c$-transform (see \cref{def:c-tran}). The ellipsoid algorithm to LP is known to be slow in practice. Our goal is to construct a fast and exact (to within numerical tolerances) MMOT solver that can scale to marginal distributions derived from high resolution imagery. In practice, our method can be applied on MINST dataset, with more than 30 marginals and much more than 120 gridpoints, comparing the recently proposed GenCol method \cite{Friesecke2022GenCol}. 

\subsection*{Contribution}
In this paper, we develop and analyze a novel algorithm for the efficient solution of continuous MMOT problems with pairwise costs.  More specifically, our method can deal with all cost function in a pairwise form $c(x_1,\cdots,x_m)=\sum_{i<j}c_{ij}(x_i,x_j)$, which includes most classical cost functions used in MMOT except the determinant form in \cite{Carlier2008determinant}. In this category of cost functions, there is a natural graphical structure between marginals that our approach exploits to construct an efficient MMOT solver.  In particular, inspired by the ``back-and-forth'' method of \cite{Jacobs2020BF} for the classic two-marginal OT setting, we derive gradient updates and efficient $c$-transform routines that can be combined to solve the pairwise MMOT problem. The pushforward map, obtained as part of our computed MMOT solution, is accurate and can be utilized in regularizaiton-free applications, including denoising and the Wasserstein barycentric interpolation. 

The paper is organized as follows. \Cref{sec:pre} consists of two parts. In the first part, we provide with all ingredients to understand the back-and-forth method (BFM). The concept of gradient in the Hilbert space is the key to BFM. In the second part, we introduce the MMOT problem with a focus on the duality theory, and basic graph theory for our description.   In \Cref{sec:graph}, we introduce the graphical representation of MMOT under assumptions \ref{A1}--\ref{A3}, and develop the theory necessary to reformulate any MMOT of such type into an equivalent MMOT problem with a tree representation; this is encapsulated in \Cref{thm:unroll}.  \Cref{sec:alg} introduces the main algorithm \Cref{alg:root-tree} to solve any MMOT that has a tree representation. Numerical results with empirical convergence rates studies are presented in \Cref{sec:app}. Extensions to the Wasserstein barycenter problem are described in \Cref{subsec:bary} as an important application of our methods.  We close with concluding thoughts in \Cref{sec:summary}.
\section{Preliminaries}\label{sec:pre}

\subsection{Two-Marginal Primal Formulation}
The classic OT problem is a resource allocation problem. Given two Borel probability measures $\mu_1,\mu_2$ on metric spaces $X_1,X_2$ and a continuous cost function $c: X_1\times X_2\to [0,+\infty]$, the classic Monge OT problem is to find the cheapest way to transport $\mu_1$ to $\mu_2$:
\begin{equation}
\label{MP}
\inf_T\left\{\int c(x,T(x))\mathrm{d}\mu_1(x): \push{T}{\mu_1}=\mu_2\right\}.
\end{equation}
Measures $\mu_1$ and $\mu_2$ are often referred as the \textit{source measure} and the \textit{target measure}, respectively. The \textit{transport map} $T$ satisfies the push-forward condition $\push{T}{\mu_1} = \mu_2$, which is shorthand notation for the condition that $\mu_2(A)=\mu_1(T^{-1}(A))$ for all Borel sets $A\subset X_2$.  The optimal transport map $T^*$ is called the \textit{Monge map}.  The Monge map does not always exist. For example, if $\mu_1$ has fewer atoms than $\mu_2$, then mass from the same point in $X_1$ must be split to multiple points in $X_2$, which cannot be accomplished by any deterministic map $T$. Kantorovich provided a relaxation of \eqref{MP} that circumvents this issue.  The Kantorovich problem takes the form
\begin{equation}
    \label{KP}
    \inf\left\{\int_{X_1\times X_2}c(x_1,x_2)\mathrm{d}P(x_1,x_2): P\in\Gamma(\mu_1,\mu_2)\right\},
\end{equation}
where $\Gamma(\mu_1,\mu_2)$ is the set of \textit{transport plans} defined by
\[\Gamma(\mu_1,\mu_2)=\left\{P\in \P(X_1\times X_2): \push{\pi_1}{P}=\mu_1,~\push{\pi_2}{P}=\mu_2 \right\},\]
and $\pi_1,\pi_2$ are projections on each coordinate\footnote{We distinguish between the projection of measures and the canonical projection of measures. For example, given a probability measure $P$ on the space $(X_1\times X_2)\times X_3\cdots \times X_m$, then the projection of measures $\push{\pi_1}{P}=P_{1}\in\P(X_1), \push{\pi_2}{P}=P_2\in\P(X_2)$, while the canonical projection of measures $\push{\proj_1}{P}=P_{12}\in\P(X_1\times X_2), \push{\proj_2}{P}=P_3\in \P(X_3)$. This will be used in \Cref{lem:gluing}.}. The set of transport plans $\Gamma(\mu_1,\mu_2)$ consists of all joint probability measures on $X_1\times X_2$ with marginals $\mu_1$ and $\mu_2$. If no confusion may arise, we also use $P_i$ as shorthand notation for $\push{\pi_i}{P}$.  When the source distribution $\mu_1$ is atomless, the transport plan $P^\ast$ solving the Kantorovich problem collapses onto the graph of the Monge solution and $P^\ast = \push{\id, T^\ast}{\mu_1}$, where $\id : X_1\rightarrow X_1$ is the identity map (see \cite{Villani2003TOT}).

\subsection{Two-Marginal Duality Theory}\label{subsec:ot-dual}
Instead of solving \eqref{KP} directly, it is often more efficient to solve the dual form.  From here on we will restrict our attention to spaces $X_1=X_2=\Omega\subset (\R^d,\abs{\cdot}_2)$ that are convex and compact, as well as costs in the form $c(x_1,x_2)=h(x_1-x_2)$ for a strictly convex function $h:\R^d\mapsto \R$. Furthermore, we assume all marginals $(\mu_i)$ are probability measures that are absolutely continuous with respect to the Lebesgue measure.   Under these constraints, we have the following theorem.

\begin{theorem}[\cite{Santambrogio2015OTAM}, Theorem 1.40]
The dual problem to \eqref{KP}
\begin{equation}
    \label{DP}
    \sup\left\{\int_{X_1} f_1\mathrm{d}\mu_1 +\int_{X_2} f_2\mathrm{d}\mu_2\,:\, f_1, f_2\in L^1, f_1(x_1)+f_2(x_2)\leqslant c(x_1,x_2)\right\}
\end{equation}
admits a $c$-conjugate solution $(f_1,f_2)$. That is, $f_1(x_1)=\inf_{x_2} c(x_1,x_2)-f_2(x_2)$ and $f_2(x_2)=\inf_{x_1} c(x_1,x_2)-f_1(x_1)$. (See \Cref{def:c-tran}.) Furthermore, the strong duality between \eqref{KP} and \eqref{DP} holds.
\end{theorem}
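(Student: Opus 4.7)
The plan is to establish strong duality via a Fenchel--Rockafellar argument and then obtain the existence of a $c$-conjugate maximizing pair by combining the monotonicity of the $c$-transform with an Arzel\`a--Ascoli compactness argument on $\Omega$.

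For strong duality, I would recast \eqref{KP} as
\[
\inf_{P \in \mathcal{M}_+(\Omega^2)} \int c \, \mathrm{d}P + \chi_{\{P_1 = \mu_1,\,P_2 = \mu_2\}}(P),
\]
and view this as a linear program on $\mathcal{M}(\Omega^2)$ in duality with $C(\Omega^2)$. Since $c = h(x_1 - x_2)$ with $h$ continuous on the compact difference set $\Omega - \Omega$, the linear functional $P \mapsto \int c \, \mathrm{d}P$ is continuous, and the marginal constraints are affine and feasible (take $P = \mu_1 \otimes \mu_2$ at which $\int c \, \mathrm{d}P < \infty$). The Fenchel--Rockafellar theorem then supplies strong duality with \eqref{DP}; alternatively one can invoke a minimax argument of Sion type after noting weak-$*$ compactness of $\Gamma(\mu_1,\mu_2)$ in $\mathcal{M}(\Omega^2)$.

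For the existence of a $c$-conjugate maximizer, I would exploit the standard \emph{double $c$-transform} improvement. Given any admissible $(f_1,f_2) \in L^1(\mu_1) \times L^1(\mu_2)$ with $f_1(x_1)+f_2(x_2) \leq c(x_1,x_2)$, set
\[
\tilde f_1(x_1) \defeq \inf_{x_2 \in \Omega} c(x_1,x_2) - f_2(x_2), \qquad
\tilde f_2(x_2) \defeq \inf_{x_1 \in \Omega} c(x_1,x_2) - \tilde f_1(x_1).
\]
By construction $\tilde f_1 \geq f_1$ and $\tilde f_2 \geq f_2$ pointwise, so the objective of \eqref{DP} weakly increases, and the pair $(\tilde f_1, \tilde f_2)$ is $c$-conjugate. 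Because $\Omega$ is compact and $h$ is continuous, $c$ is uniformly continuous on $\Omega^2$; a direct calculation shows that any $c$-transform inherits this modulus of continuity, so $\tilde f_1, \tilde f_2$ are equicontinuous with a modulus depending only on $h$ and $\Omega$. Taking a maximizing sequence of such $c$-conjugate pairs and normalizing by constants (using the invariance $(f_1,f_2) \mapsto (f_1+a, f_2 - a)$) pins down a uniformly bounded, equicontinuous family, and Arzel\`a--Ascoli yields a uniformly convergent subsequence whose limit is again admissible, $c$-conjugate (the class is closed under uniform limits), and attains the supremum, which by the first step equals the primal infimum.

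The main obstacle is the technical verification that the $c$-transforms preserve the relevant regularity, namely that (i) the pointwise infima defining $\tilde f_1, \tilde f_2$ are attained thanks to compactness of $\Omega$ and lower semicontinuity of $c(\cdot,x_2) - f_2(\cdot)$ after one $c$-transform, and (ii) that the equicontinuity bound is uniform across the maximizing sequence, so that the normalization argument produces a genuinely compact family in $C(\Omega)$ rather than merely in some weak topology on $L^1$. Once these regularity points are in place, the $c$-conjugate maximizer and strong duality follow, and the integrability against the absolutely continuous marginals $\mu_1,\mu_2$ is automatic because the potentials are bounded continuous functions on the compact set $\Omega$.
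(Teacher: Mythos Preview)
The paper does not supply its own proof of this statement; it is quoted verbatim as Theorem~1.40 of \cite{Santambrogio2015OTAM} and used as background. Your sketch is essentially the standard argument one finds in that reference: Fenchel--Rockafellar (or a minimax argument using weak-$*$ compactness of $\Gamma(\mu_1,\mu_2)$) for strong duality, followed by the double $c$-transform improvement and Arzel\`a--Ascoli compactness on the compact domain $\Omega$ to extract a $c$-conjugate maximizer. There is nothing in the paper to compare against, and your outline is correct as it stands; the technical points you flag (attainment of the infima, uniform equicontinuity of $c$-transforms) are exactly the ones handled in the cited source under the standing hypotheses that $\Omega$ is compact and $c$ is continuous.
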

As a result of strong duality, the maximal objective value in the dual problem \eqref{DP} is equal to the minimum objective in the primal problem \eqref{KP}.  For $c(x_1,x_2)=\abs{x_1-x_2}^2$, the optimal value is called as the \textit{Wasserstein distance} $W^2_2(\mu_1,\mu_2)$. 

As discussed in \cite{Ambrosio2013User}, because the optimal dual variables satisfy $f_1(x_1)+f_2(x_2)=c(x_1,x_2)$ on the support of the optimal coupling $P^*$, the Monge map can be recovered from the optimal dual solution $f_1,f_2$ when the map exists. See \cref{lem:trans} and \cref{thm:trans} in the supplementary document.

\subsection{$c$-Transform}\label{subsec:c-trans}
The constraint in \eqref{DP} induces a key concept in computational OT: the $c$-transform. The $c$-transform is a natural generalization of the more common Legendre-Fenchel transform $f^*(y)=\sup_{x}x \cdot y - f(x)$. 
\begin{definition}[$c$-transform]
\label{def:c-tran}
The $c$-transform of a function $f:X_1\mapsto \R$ is given by
\[f^c(x_2)=\inf_{x_1} c(x_1,x_2)-f(x_1).\]
In addition, we say that $f$ is $c$-concave if there exists a function $g: X_2\mapsto \R$ such that $f=g^c$. We say $(f_1,f_2)$ are $c$-conjugate if $f_1=f_2^c$ and $f_2=f_1^c$.
\end{definition}

\begin{remark}
\label{rmk:ctran}
The $c$-transform cannot decrease the objective value in \eqref{DP}. For any feasible dual variables $f_1$ and $f_2$, and any fixed point $x_2$, we have $f_2(x_2) \leqslant c(x_1,x_2) - f_1(x_1)$ for all $x_1$.  This implies that $f_2(x_2)\leqslant f_1^c(x_2)$ and subsequently $\int_{X_2} f_2 \mathrm{d}\mu_2 \leqslant \int_{X_2} f_1^c \mathrm{d}\mu_2$.
\end{remark}

Given $c(x_1,x_2)=h(x_1-x_2)$ for some strictly convex function $h(\cdot):\R^d\mapsto \R$,
the map $S_f(x_1)\defeq x_1-\nabla h^*(\nabla f(x_1))$ will serve as a key ingredient in the classical OT theory. In particular, if $f=g^c$ for some continuous function $g$, then $S_f(x_1)$ is the unique minimizer to $\inf_{x_2}c(x_1,x_2)-g(x_2)$. Please refer to the supplemental document.

\subsection{Gradient-based Optimization for OT}
\label{subsec:gd}
As a result of \Cref{rmk:ctran} (also see Proposition 1.11 in \cite{Santambrogio2015OTAM}), the dual problem is equivalent to either of the following problems
\begin{subequations}
\label{eq:func}
\begin{align}
    \label{eq:func_I}&\sup\left\{I_1(f_1) = \int_{X_1}f_1\mathrm{d}\mu_1 + \int_{X_2}f_1^c\mathrm{d}\mu_2\,:\, f_1 \textrm{ is } c\textrm{-concave}\right\};\\
    \label{eq:func_J}&\sup\left\{I_2(f_2) = \int_{X_1}f_2^c\mathrm{d}\mu_1 + \int_{X_2}f_2\mathrm{d}\mu_2\,:\, f_2 \textrm{ is } c\textrm{-concave}\right\},
\end{align}
\end{subequations}
whose maximizers are guaranteed to exist.  Concavity and existence of a $c$-concave maximizer was proved by Brenier \cite{Brenier1991Polar} for $c(x_1,x_2)=\frac{1}{2}\abs{x_1-x_2}^2$ and by Gangbo and McCann \cite{Gangbo1996Geometry} for more general cost functions.

The concavity of \eqref{eq:func} suggests that some gradient-based optimization could be effective at solving these problems.  A gradient ascent step, for example, would take the form 
\begin{equation}
f^{(k+1)} = f^{(k)} + \sigma \nabla I(f^{(k)}), 
\label{eq:grad_ascent}
\end{equation}
where $f^{(k)}$ is the value of the dual variable at optimization iteration $k$, $\sigma\in\mathbb{R}$ is a step size parameter, and $\nabla I(f^{(k)})$ is a functional gradient of $I$ with respect to a dual variable $f$ (i.e., $f_1$ in \eqref{eq:func_I} or $f_2$ in \eqref{eq:func_J}).  The Fr\'{e}chet derivatives provide a mechanism for defining the gradient in a suitable Hilbert space.

\begin{definition}[Fr\'{e}chet derivatives and gradient in the Hilbert space]
Given a separable Hilbert space $(\H,\norm{\cdot}_{\H})$ and a functional $E:\H\mapsto \R\cup\{+\infty\}$, we say a bounded linear operator $\delta E_u:\H\mapsto \R$ is the Fr\'{e}chet derivative of $E$ at $u\in\H$ in the direction $v\in\H$ if
\begin{equation*}
    \lim_{\norm{v}_{\H}\to 0}\frac{\abs{E(u+v)-E(u)-\delta E_u(v)}}{\norm{v}_{\H}}=0.
\end{equation*}
The gradient $\nabla_{\H}E(u)\in\H$ is then defined as an element in the Hilbert space that can be used to compute any directional Fr\'{e}chet derivative through an inner product.  More specifically, we say $\nabla_{\H}E(u)$ is the Hilbert space gradient of $E$ at $u$ if
\begin{equation*}
    \langle \nabla_H E(u),v\rangle = \delta E_u(v),\qquad\textrm{for all }v\in\H.
\end{equation*}
\end{definition}
Note that the choice of Hilbert space defines the inner product and thus the form of the gradient.  Jacobs and L\'{e}ger \cite{Jacobs2020BF} show that for $c(x_1,x_2)=\frac{1}{2}\abs{x_1-x_2}^2$, guaranteeing the ascent of \eqref{eq:grad_ascent} requires that the space $\H$ cannot be weaker than 
\begin{equation*}
    \dot{H}^1(\Omega)\defeq\left\{u:\Omega\mapsto\R\,: \int_{\Omega}u\mathrm{d}x=0,\, \int_{\Omega}\abs{\nabla u(x)}^2\mathrm{d}x<\infty\right\},
\end{equation*}
with the inner product $\langle u_1, u_2\rangle_{\dot{H}^1} = \int_{\Omega}\nabla u_1 \cdot \nabla u_2 \mathrm{d}x.$

Given a cost function $c(x_1,x_2)=h(x_1-x_2)$ for some strictly convex function $h(\cdot):\R^d\mapsto \R$, following techniques in \cite{Gangbo1996Geometry, Gangbo1998Multidimensional}, Lemma 3 in \cite{Jacobs2020BF} shows that the choice $\dot{H}^1$ results in the gradients:
\begin{equation}
\label{eq:H1gradient}
\begin{aligned}
    \nabla_{\dot{H}^1}I_1(f_1)&=(-\Delta)^{-1}\left(\mu_1-\push {S_{f_1^c}}{\mu_2}\right);\\    
    \nabla_{\dot{H}^1}I_2(f_2)&=(-\Delta)^{-1}\left(\mu_2-\push {S_{f_2^c}}{\mu_1}\right),
\end{aligned}
\end{equation}
where $\Delta=\nabla\cdot\nabla$ is the Laplacian operator and the pushforward map $S_f(x)$ is given by
\begin{equation}
    \label{eq:pf_map}
    S_{f}(x)\defeq x - \nabla h^*(\nabla f(x)).
\end{equation}
In fact, as shown by Brenier \cite{Brenier1991Polar} for the quadratic cost and by Gangbo and McCann for strictly convex costs, the maximizer $(f_1,f_1^c)$ to \eqref{eq:func_I} (analogously with $(f_2^c,f_2)$ to \eqref{eq:func_J}) induces mappings $S_{f_1}$ and $S_{f_1^c}$ (analogously with $S_{f_2}$ and $S_{f_2^c}$), which satisfy
\begin{itemize}
\item $\push{S_{f_1}}{\mu_1}=\mu_2$ and $\push{S_{f_1^c}}{\mu_2}=\mu_1$;
\item $S_{f_1}: X_1\mapsto X_2$ defines the unique minimizer to $\inf_{x_2} c(x_1,x_2)-f_1^c(x_2)$; and $S_{f_1^c}: X_2\mapsto X_1$ defines the unique minimzer to $\inf_{x_1} c(x_1,x_2)-f_1(x_1)$.
\end{itemize}
In this sense, formulas \eqref{eq:H1gradient} are natural, as one may observe that $\mu_1=\push{S_{f_1^c}}{\mu_2}$ corresponds to the gradient being zero $\nabla_{\dot{H}^1}I_1(f_1)=0\in \dot{H}^{1}$ and the inverse Laplacian $(-\Delta)^{-1}$ stemse from the inner product structure of the Hilbert space $\dot{H}^1$.

\subsection{Multi-Marginal Primal Formulation}
\cite{Pass2015Survey} provided a detailed theoretical survey about MMOT while here we just briefly introduce materials that we will use later. The primal MMOT problem takes the form
\begin{equation}
    \label{eq:KP_MMOT}
\inf_{P\in \Gamma(\mu_1,\cdots,\mu_m)} \int_{\boldsymbol{X}} c(x_1,\cdots,x_m)\mathrm{d}P(x_1,\cdots,x_m),
\end{equation}
for the space $\boldsymbol{X}=X_1\times\cdots\times X_m$ and prescribed marginal probability measures $(\mu_i)_{i=1}^m$.  The set of transport plans $\Gamma(\mu_1,\cdots,\mu_m)$ is defined by
\begin{equation*}
    \Gamma(\mu_1,\cdots,\mu_m)\defeq\left\{P\in \P(\boldsymbol{X}) \mid \push{\pi_i}{P}=\mu_i, 1\leqslant i\leqslant m \right\}.
\end{equation*}
For simplicity, we will also denote the constraint by $P_i=\mu_i$ when the intent is clear. Analogously, the joint marginal $P_{ij}\defeq \push{\pi_{ij}}{P}$ satisfies $\int_{X_1\times\cdots A_i\times\cdots A_j\times\cdots\times X_m}\mathrm{d}P=\int_{A_i\times A_j}\mathrm{d}P_{ij}$ for all Borel sets $A_i\subset X_i$ and $A_j\subset X_j$. 

Cost functions vary in applications of MMOT. In density functional theory, costs of the form $c(x_1,\cdots,x_m)=\sum_{i<j}\abs{x_i-x_j}^{-1}$ or $c(x_1,\cdots,x_m)=\sum_{i<j}-\log\abs{x_i-x_j}$ arise (see \cite{DiMarino2017Repulsive}).  Fluid dynamics use $c(x_1,\cdots,x_m)=\sum_{i=1}^{m-1}\tau^{-1}\abs{x_i-x_{i+1}}^2$ (see \cite{Brenier1989LeastAction}).  Wasserstein barycenters can be formulated as MMOTs with costs of the form $c(x_1,\cdots,x_m)= \sum_{1\leqslant i<j\leqslant m} \lambda_i\lambda_j |x_i-x_j|^2$ (see \cite{Gangbo1998Multidimensional, Agueh2011Barycenters, Trillos2022Multimarginal} and \Cref{subsec:bary}).    An important feature of these costs is that they are all defined pairwise and fall into the form
\begin{equation*}
    c(x_1,\ldots,x_m) = \sum_{1\leqslant i<j\leqslant m} c_{ij}(x_i,x_j).
\end{equation*}
Due to their broad applicability, such pairwise costs will be the focus of this work.

\subsection{Multi-Marginal Duality Theory}
\label{subsec:mmot-dual}

Assume the cost function $c$ is continuous and each $\mu_i$ is supported on a convex and compact subset in $\R^d$. 
The dual problem corresponding to \eqref{eq:KP_MMOT} is given by
\begin{equation}
    \label{eq:DP_MMOT}\sup_{(f_1,\cdots,f_m)} \sum_{i=1}^m \int_{X_i}f_i(x_i)\mathrm{d}\mu_i,
\end{equation}
where $f_i\in L^1(\mu_i)$ and $\sum_{i=1}^m f_i(x_i)\leqslant c(x_1,\cdots,x_m)$. We call the optimal tuple $(f_1,\cdots,f_m)$ the \textit{Kantorovich potentials}. On the opposite, in \Cref{subsec:illustrative} for $m=2$ we call the optimal solution as the optimal loading/unloading prices in particular.

The concept of a $c$-splitting set extends the notion of a $c$-transform to the multi-marginal setting.
A set $G\subset \boldsymbol{X}$ is a \textit{$c$-splitting set}, if there exist $m$ functions $f_i: X_i\mapsto [-\infty,\infty)$ such that
\begin{align*}
\left\{
\begin{aligned}
    &\sum_{i=1}^m f_i(x_i)\leqslant c(x_1,x_2,\cdots,x_m)\qquad \textrm{on}\quad X;\\
    &\sum_{i=1}^m f_i(x_i) = c(x_1,x_2,\cdots,x_m)\qquad \textrm{in}\quad G.
\end{aligned}
\right.
\end{align*}
These functions $(f_1,f_2,\cdots,f_m)$ are called a \textit{$c$-splitting tuple}. 

Much like the $c$-transform was guaranteed to not decrease the dual objective in the two-marginal setting, for a $c$-splitting tuple  $(f_1,\cdots,f_m)$, there exists a \textit{$c$-conjugate tuple} $(\widetilde{f_1},\cdots,\widetilde{f_m})$ that does not decrease the dual objective.  More specifically, for any $i$,
\begin{equation*}
    \widetilde{f_i}(x) =\left(\sum_{j\neq i}\widetilde{f_j}\right)^c\defeq \inf_{\textrm{all~} y_j} c(y_1,\cdots,y_{i-1},x,y_{i+1},\cdots,y_m)-\sum_{j\neq i}\widetilde{f_j}(y_j), 
\end{equation*}
is always a better candidate to \eqref{eq:DP_MMOT}.  This is due to the fact that $f_i(x_i)\leqslant \widetilde{f_i}(x_i)$ for any $x_i\in X_i$. The $c$-conjugate tuple $(\widetilde{f_1},\widetilde{f_2},\cdots,\widetilde{f_m})$ can be constructed from:
\begin{align*}
    \left\{
    \begin{aligned}
    \widetilde{f_1}(x)&=\inf_{\textrm{all~} y_j} c(x,y_2,\cdots,y_m)-\sum_{j\geqslant 2}f_j(y_j);\\
    \widetilde{f_i}(x)&=\inf_{\textrm{all~} y_j} c(y_1,\cdots,y_{i-1},x,y_{i+1},\cdots,y_m)-\sum_{j<i}\widetilde{f_j}(y_j)-\sum_{j>i}f_j(y_j).
    \end{aligned}
    \right.
\end{align*}
This is called a ``convexification trick" in the context of convex analysis \cite{Carlier2008determinant}. 

The existence of optimal solutions to \eqref{eq:KP_MMOT} and \eqref{eq:DP_MMOT}, and the strong duality between these problems, was proved by \cite{Kellerer1984Duality} for more general cost functions. 
When $X_i=\R^d$, \cite{Carlier2008determinant} also showed the existence of a $c$-conjugate optimal solution to \eqref{eq:DP_MMOT} 
for general continuous cost functions.  An extension of this result by \cite{Pass2011Uniqueness} provides an important explicit connection between the primal and dual MMOT problems. Let $(\widetilde{f_1},\cdots, \widetilde{f_m})$ be a $c$-conjugate solution to the dual problem \eqref{eq:DP_MMOT} and let $P$ be the optimal transport plan to primal problem \eqref{eq:KP_MMOT}, then we have the following connection between the primal solution and dual solution
\begin{equation*}
    \sum_{i=1}^m \widetilde{f}_i(x_i)=c(x_1,\cdots,x_m)\qquad P\mathrm{-a.e.}.
\end{equation*}

\subsection{Graph Theory}

A graph $G=(V,E)$ is described by a set of nodes $V$ and a set of edges $E$.  We will consider graphs where each node is associated with a marginal distribution $\mu_i$, or with a dual variable $f_i$, and will simply use the index $i$ to denote the node.  An edge $e=(s,t)\in E$ from node $s$ to $t$ will then represent the pairwise cost $c_{st}(x_s,x_t)$.   We will restrict our attention to \textit{simple graphs}, which do not contain self-loops or multiple edges. 

We will use both \textit{directed} (where the order of the edge $(s,t)$ matters) and \textit{undirected graphs} (where edges are order-agnostic).  For directed graphs, we will use $N^+(i) = \{e_t : e\in E \text{ and } e_s=i\}$ and $N^-(i) : \{e_s : e\in E \text{ and } e_t=i\}$ to denote the sets of downstream nodes and upstream nodes neighboring node $i$, respectively.  A particular type of graph called a \textit{tree} will also be important to our approach.  A tree is a graph where any two nodes are only connected by a single path.  A \textit{rooted tree} is a tree graph where all of the edges flow towards a single node, called the root. In a rooted tree with edges pointing towards root node $r$, the cardinality of the downstream set $|N^+(i)|=1$ for $i\neq r$ and $|N^+(r)|=0$. 

\section{Graphical Representation of MMOT}\label{sec:graph}
In \Cref{sec:alg} we will introduce a novel computational approach to solving MMOT problems with pairwise costs.  A key component of that approach is the graphical representation of pairwise MMOT problems established below.\footnote{We adopt the terminology that the MMOT problem itself admits a graph structure, rather than saying that the \textbf{cost function} has a graph structure, which is the terminology used in \cite{Haasler2021Probabilistic, Haasler2021Tree}.  This distinction prevents confusion from the branched optimal transport problem (see e.g., \cite{Xia2003Path,Maddalena2003Irrigation,Bernot2008Structure}). 
}
This section also establishes one of our main theoretical results in \Cref{thm:unroll}: that any MMOT problem with pairwise cost can be represented as a rooted tree with nodes corresponding to marginal distributions and edges corresponding to pairwise costs.

While the duality theory described in \Cref{subsec:mmot-dual} holds in more general settings, the rest of this paper will consider cost functions that satisfy the following three assumptions:
\begin{enumerate}[label=(A\arabic*)]
    \item \label{A1} The cost function can be expressed as a sum of pairwise costs: \[c(x_1,\cdots,x_m)=\sum_{1\leqslant i<j\leqslant m}c_{ij}(x_i,x_j);\]
    \item \label{A2} At least $(m-1)$ functions $c_{ij}(x_i,x_j)$ are not identically zero;
    \item \label{A3} For each pair $(i,j)$, $c_{ij}(x_i,x_j)=h_{ij}(x_i-x_j)$ for some strictly convex function $h_{ij}$. 
\end{enumerate}
The pairwise assumption \ref{A1} causes the cost function to have a graph structure. The assumption \ref{A2} ensures that the MMOT cannot be divided into multiple independent MMOT problems.  Note that problems that do not satisfy the assumption \ref{A2} can be trivially split into subproblems that do satisfy this assumption.  The assumption \ref{A3} enables a concise representation of the pushforward map (see discussions in \Cref{subsec:gd} or \Cref{thm:trans} in the supplemental document) and enables computationally efficient evaluations of $c_{ij}$-transforms. As a result of the general duality theory established by \cite{Kellerer1984Duality}, under \ref{A1}--\ref{A3} the primal and dual MMOT problems in \eqref{eq:KP_MMOT} and \eqref{eq:DP_MMOT} both admit solutions, and strong duality holds.

The MMOT problem \eqref{eq:KP_MMOT} under assumption \ref{A1} is analogous to an undirected graph with $m$ nodes representing the marginals, and with edges for all costs $c_{st}(x_s,x_t)$ that are not identically $0$.  This bijection is illustrated in \Cref{fig:graph_bijection} for several MMOT problems.

As we show below in \Cref{sec:alg}, MMOT problems that admit tree representations, like \Cref{fig:UG_tree}, can be efficiently solved with a gradient-based optimization strategy generalized from the back-and-forth method of \cite{Jacobs2020BF} for two-marginal OT problems.    In \Cref{thm:unroll} below, we show that the solution of MMOT problems without this tree structure (e.g., \Cref{fig:UG_cycle} and \Cref{fig:UG_bary}) can be obtained through the solution of a larger ``unrolled'' problem that does exhibit this tree structure.  This fundamental result allows us to apply our tree-based approach to any MMOT problem with pairwise costs.

\begin{figure}[htb]
\centering
\begin{subfigure}{.3\linewidth}
    \centering
    \begin{forest}
    for tree={
    edge = {-},
    circle,
    minimum size=5mm,
    inner sep=0pt,
    draw,
    math content,
    tier/.wrap pgfmath arg={tier #1}{level()},
    anchor=center
    },
    [\mu_1 [\mu_2 [\mu_3]] [\mu_4]]
    \end{forest}
    \caption{$c=c_{12}+c_{23}+c_{14}$.}\label{fig:UG_tree}
\end{subfigure}
\begin{subfigure}{.3\linewidth}
    \centering
    \begin{forest}
    for tree={
    edge = {-},
    circle,
    minimum size=5mm,
    inner sep=0pt,
    draw,
    math content,
    tier/.wrap pgfmath arg={tier #1}{level()},
    anchor=center
    },
    [\mu_1
      [\mu_2 
       [,phantom]
       [\mu_3, name=3]
      ]
      [\mu_4, name=4]
    ]
    \draw (4.south) -- (3.north east);
    \end{forest}
    \caption{$c=c_{12}+c_{14}+c_{23}+c_{34}$.}\label{fig:UG_cycle}
\end{subfigure}
\begin{subfigure}{.3\linewidth}
    \centering
    \begin{forest}
    for tree={
    edge = {-},
    circle,
    minimum size=5mm,
    inner sep=0pt,
    draw,
    math content,
    tier/.wrap pgfmath arg={tier #1}{level()},
    anchor=center
    },
    [\mu_1, name=1
      [\mu_2, name=2
       [,phantom]
       [\mu_3, name=3]
      ]
      [\mu_4, name=4]
    ]
    \draw (4.south) -- (3.north east);
    \draw (1.south) -- (3.north);
    \draw (2.east) -- (4.west);
    \end{forest}
    \caption{$c=\sum_{i<j}c_{ij}$.}\label{fig:UG_bary}
\end{subfigure}
\caption{Undirected graphical representations of MMOT problems with $m=4$ marginals.  (a) shows a cost that can be represented directly as a tree, whereas (b) and (c) are cyclic graphs.  \Cref{thm:unroll} provides a mechanism for unrolling these cyclic graphs in equivalent MMOT problems with tree structures.  Also note that (c) shows the structure that arises in the MMOT formulation of Wasserstein barycenter problems.}
\label{fig:graph_bijection}
\end{figure}

The following lemma establishes a connection between the size of an MMOT problem with cyclic graph and the size of an equivalent problem with a tree structure.
\begin{lemma}\label{lem:duplicate}
Given an undirected graph $G=(V,E)$ with possible cycles, we need exactly $\abs{E}+1-\abs{V}$ duplicate nodes to be unrolled into a tree.
\end{lemma}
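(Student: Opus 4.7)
The plan is a straightforward counting argument resting on the defining identity for trees, namely that any tree with $n$ nodes has exactly $n-1$ edges.

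First I would pin down what "unrolled into a tree" means in this context: each pairwise cost $c_{ij}$ of the original MMOT must appear exactly once in the unrolled problem, so each edge of $G$ corresponds bijectively to an edge in the resulting tree $T$. Hence if $k$ duplicate nodes are introduced, $T$ has $|V|+k$ nodes and $|E|$ edges. Applying the tree identity $(|V|+k)-|E|=1$ gives $k=|E|+1-|V|$, which settles the necessity: any unrolling produces at least this many duplicates, and by the identity, exactly this many.

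For the sufficiency half I would produce an explicit construction. By \ref{A2}, $G$ is connected, so it admits a spanning tree $T_0\subseteq G$, and the number of chord edges (those in $E\setminus T_0$) is exactly $|E|-(|V|-1)=|E|-|V|+1$. For each chord $e=(u,v)$, duplicate one endpoint (say $v$ to a fresh copy $v'$, which is assigned the same marginal $\mu_v$) and replace $e$ by the edge $(u,v')$; $v'$ is attached only via this edge. After processing every chord in turn, the resulting graph has $|V|+(|E|-|V|+1)=|E|+1$ nodes and $|E|$ edges, it remains connected (each step appends a leaf to a connected graph), and it is acyclic (each replacement strictly breaks a fundamental cycle without introducing new ones, since $v'$ has degree one). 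Therefore it is a tree realizing the claimed $|E|+1-|V|$ duplications.

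The main delicate point is not the counting but justifying that the chord-cutting procedure is a legitimate \emph{unrolling} in the sense compatible with the MMOT reformulation that follows — i.e., that introducing a duplicate marginal $\mu_{v'}=\mu_v$ together with the edge reassignment produces a problem equivalent to the original; this equivalence is the content of the subsequent \Cref{thm:unroll} and is not needed here. Beyond that, the only care needed is to verify acyclicity after a sequence of chord replacements, which follows inductively because each operation removes one independent cycle of the fundamental cycle basis associated with $T_0$ while keeping the spanning tree $T_0$ intact.
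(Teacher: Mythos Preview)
Your argument is correct and matches the paper's approach: both rest on the observation that unrolling preserves the edge count, so the tree identity $|\bar V|=|\bar E|+1=|E|+1$ forces exactly $|E|+1-|V|$ duplicates. Your version is simply more thorough, supplying the explicit spanning-tree-plus-chord construction for sufficiency that the paper's terse proof leaves implicit.
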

\begin{proof}

Adding duplicate nodes and replacing original edges, the new graph retains the same number of edges while becoming a tree. A tree of $\abs{E}$ edges has $\abs{E}+1$ nodes. Thus we need to add $\abs{E}+1-\abs{V}$ duplicate nodes. \end{proof}

To show that solving any pairwise MMOT is equivalent to solving another MMOT with a tree representation, we will need the following generalized gluing lemma.
\begin{lemma}[Generalized gluing lemma, Theorem A.1 in \cite{deAcosta1982Invariance}]
\label{lem:gluing}
Let $J$ be an arbitrary index set and for each $j\in J$, let $X_j, Y_j$ be Polish spaces and $\boldsymbol{X}=\Pi_{j\in J}X_j, \boldsymbol{Y}=\Pi_{j\in J}Y_j$. Let $\phi_j: X_j\mapsto Y_j$ be a measurable map and let $\mu_j\in \P(X_j)$. Let $Q \in \P(\boldsymbol{Y})$ such that $\push{\phi_j}{\mu_j}=\push{\mathbf{Proj}_j}{Q}$ for all $j\in J$.

Then there exists a $P\in\P(\boldsymbol{X})$ such that:
\begin{equation*}
\left\{
\begin{aligned}
    &\push{\mathbf{Proj}_j}{P}=\mu_j,\qquad\textrm{for all }j\in J;\\
    &\left(\left(\phi_j\circ\mathbf{Proj}_j\right)_{j\in J}\right)_{\#}{P}=Q.
\end{aligned}
\right.
\end{equation*}
\end{lemma}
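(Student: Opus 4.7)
The plan is to build $P$ by disintegrating each $\mu_j$ along $\phi_j$, then gluing these disintegrations together through $Q$. First, since each $X_j$ is Polish and $\push{\phi_j}{\mu_j} = Q_j := \push{\mathbf{Proj}_j}{Q}$, I would invoke the disintegration theorem to obtain, for each $j \in J$, a $Q_j$-a.e. uniquely determined measurable family of probability measures $\{\mu_j^{y_j}\}_{y_j \in Y_j}$ on $X_j$ such that $\mu_j^{y_j}$ is concentrated on the fiber $\phi_j^{-1}(y_j)$ and
\begin{equation*}
\mu_j = \int_{Y_j} \mu_j^{y_j} \, dQ_j(y_j).
\end{equation*}

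Next, I would define the candidate coupling on $\boldsymbol{X}$ by
\begin{equation*}
P := \int_{\boldsymbol{Y}} \left( \bigotimes_{j \in J} \mu_j^{y_j} \right) dQ(y),
\end{equation*}
where $y = (y_j)_{j \in J}$ and $\bigotimes_{j \in J} \mu_j^{y_j}$ is the product measure on $\boldsymbol{X}$ with the product $\sigma$-algebra. The key verifications are then routine. For the marginal condition, fix $k \in J$; applying Fubini/Tonelli (or the definition of the product measure on a single coordinate) gives
\begin{equation*}
\push{\mathbf{Proj}_k}{P} = \int_{\boldsymbol{Y}} \mu_k^{y_k} \, dQ(y) = \int_{Y_k} \mu_k^{y_k} \, dQ_k(y_k) = \mu_k.
\end{equation*}
For the joint pushforward, observe that because $\mu_j^{y_j}$ is concentrated on $\phi_j^{-1}(y_j)$, under the map $\Phi := (\phi_j \circ \mathbf{Proj}_j)_{j \in J}$ the product measure $\bigotimes_{j} \mu_j^{y_j}$ pushes forward to the Dirac mass $\delta_y$ on $\boldsymbol{Y}$. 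Integrating against $Q$ yields $\push{\Phi}{P} = \int_{\boldsymbol{Y}} \delta_y \, dQ(y) = Q$.

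The main technical obstacle is making the product measure $\bigotimes_{j \in J} \mu_j^{y_j}$ rigorous when $J$ is uncountable, since $\boldsymbol{X}$ with the product topology need not be Polish. This is handled by working with the product $\sigma$-algebra and constructing the measure via Kolmogorov's extension theorem on finite-dimensional cylinder sets (the consistency of finite products of probability measures is automatic), and by verifying joint measurability in $y$ of the map $y \mapsto \bigotimes_{j \in F} \mu_j^{y_j}(A)$ for each finite $F \subset J$ and each measurable cylinder $A$, which follows from the measurability of each individual disintegration $y_j \mapsto \mu_j^{y_j}(B_j)$. A secondary, smaller issue is that one must check $P$ is a well-defined probability measure, but this is immediate since $Q$ is a probability measure and each fiber product is a probability measure. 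Once these measurability bookkeeping points are handled, the construction and the two identities go through as above.
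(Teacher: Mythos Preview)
The paper does not supply its own proof of this lemma; it is quoted verbatim as Theorem~A.1 of de~Acosta (1982) and used as a black box in the proof of \Cref{thm:unroll}. Your proposal is the standard disintegration-and-reglue argument and is correct in outline: obtain fiberwise conditionals $\mu_j^{y_j}$ via the disintegration theorem (valid since $X_j,Y_j$ are Polish and $\phi_j$ is Borel), take the $Q$-mixture of the product of these conditionals, and verify the two identities. Your identification of the uncountable-$J$ issue and its resolution via Kolmogorov extension on cylinder sets is also the right bookkeeping. There is nothing to compare against in the paper itself, and your argument matches the classical proof one would find in the cited reference.
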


We are now ready to show that any pairwise MMOT problem can be ``unrolled'' into an equivalent MMOT problem with a tree structure.  Qualitatively, the process of unrolling a graph is illustrated in \Cref{fig:unroll}, where a single cycle is broken by duplicating the marginal $\mu_4$.  This process is made more precise in \Cref{thm:unroll}.

\begin{theorem}\label{thm:unroll}
Given a cost function $c(x_1,\cdots,x_m)$ satisfying assumptions \ref{A1}, \ref{A2} and \ref{A3} that corresponds to an undirected graph $G=(V,E)$ with possible cycles, let $n=\abs{E}+1$.
\begin{enumerate}[label=(\alph*)]
\item There exists a map: $\bar{T}:X_1\times\dots\times X_m \mapsto X_1\times \dots\times X_n$ such that for the cost function $\bar{c}(x_1\cdots,x_n)$ on $X_1\times\dots\times X_n$ with $c=\bar{c}\circ \bar{T}$, which corresponds to an undirected tree $\bar{G}=(\bar{V},\bar{E})$ with $\abs{\bar{E}}=\abs{E}=n-1$ and $\abs{\bar{V}}=n$, and we have
\begin{align}
\label{eq:mmot-primal-change}
    \inf_{P^{(m)}\in\Gamma(\mu_1,\cdots,\mu_m)} \int c(x_1,\cdots,x_m)\mathrm{d}P^{(m)}=\inf_{P^{(n)}\in\mathcal{Q}^{(n)}_2} \int\bar{c}(x_1,\cdots,x_n) \mathrm{d}P^{(n)},
\end{align}
where $\mathcal{Q}^{(n)}_2 =\left\{Q=\push{\bar{T}}{P}\in \mathbb{P}(X_1\times\cdots\times X_n)\mid P\in \Gamma(\mu_1,\cdots,\mu_m) \right\}$
\item Let $P^{(m)}$ and $(f_i)_{i=1}^m$ be the optimal primal and dual solutions to the original MMOT \eqref{eq:KP_MMOT} and \eqref{eq:DP_MMOT}, and let $P^{(n)}$ and $(\bar{f}_i)_{i=1}^m$ be the optimal primal and dual solutions to the new MMOT:
\begin{equation}
\label{eq:new-mmot}
\begin{aligned}
    &\inf_{P^{(n)}\in \Gamma(\mu_1,\cdots,\mu_n)}\int \bar{c}(x_1,\cdots,x_n)\mathrm{d}P^{(n)};\\
    &\sup_{\bar{f}_1 +\cdots +\bar{f}_n \leqslant \bar{c}} \sum_{i=1}^n \int \bar{f}_i(x_i)\mathrm{d}\mu_i,
\end{aligned}
\end{equation}
where $(\mu_k)_{k=m+1}^n$ are duplicated from $(\mu_i)_{i=1}^m$ in the unrolling process (shown in \cref{fig:unroll} as an illustration). Then the new MMOT provides a lower bound to the original MMOT, that is:
\begin{equation}
    \label{eq:MMOT_lower}
    \begin{aligned}
    \inf_{P^{(m)}\in \Gamma(\mu_1,\cdots,\mu_m)}\int c\mathrm{d}P^{(m)}=
    &\sup_{f_1 +\cdots + f_m \leqslant c} \sum_{i=1}^m \int f_i(x_i)\mathrm{d}\mu_i\\
        \geqslant &\sup_{\bar{f}_1 +\cdots +\bar{f}_n \leqslant \bar{c}} \sum_{i=1}^n \int \bar{f}_i(x_i)\mathrm{d}\mu_i\\
        =&\inf_{P^{(n)}\in \Gamma(\mu_1,\cdots,\mu_n)}\int \bar{c}\mathrm{d}P^{(n)}.
    \end{aligned}
\end{equation}
\item Assume all $c_{ij}$ and the optimal dual solutions are $C^{2,\alpha}$, then the equality in \eqref{eq:MMOT_lower} holds. That is, solving the original MMOT is equivalent to solve the new MMOT. Furthermore, for any $i$, the original optimal dual solution $f_i$ is the sum of all $\bar{f}_j$ whose nodes are duplicated from $\mu_i$.
\end{enumerate}
\end{theorem}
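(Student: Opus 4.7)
For (a), the natural choice is to take $\bar{T}:X_1\times\cdots\times X_m\to X_1\times\cdots\times X_n$ to be the coordinate-duplication map: writing $i(k)$ for the original node from which the $k$-th node of the tree was duplicated (with $i(k)=k$ for $k\leqslant m$), set $\bar{T}_k(x_1,\ldots,x_m)=x_{i(k)}$ and $\bar{c}(y_1,\ldots,y_n)=\sum_{(j,k)\in\bar{E}}c_{jk}(y_j,y_k)$, where the tree edges inherit the original pairwise cost functions one for one. By construction $\bar{c}\circ\bar{T}=c$ pointwise, so the standard change-of-variables formula gives $\int\bar{c}\,d(\bar{T}_{\#}P)=\int c\,dP$ for every $P\in\Gamma(\mu_1,\ldots,\mu_m)$, and the equality of infima in \eqref{eq:mmot-primal-change} follows because $\mathcal{Q}_2^{(n)}$ is by definition the image $\bar{T}_{\#}\Gamma(\mu_1,\ldots,\mu_m)$.

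For (b), the core observation is that $\mathcal{Q}_2^{(n)}\subseteq\Gamma(\mu_1,\ldots,\mu_n)$: for $Q=\bar{T}_{\#}P$ one computes $(\pi_k)_{\#}Q=(\pi_{i(k)})_{\#}P=\mu_{i(k)}=\mu_k$, so every tree marginal constraint is automatic. Replacing $\mathcal{Q}_2^{(n)}$ with the larger set $\Gamma(\mu_1,\ldots,\mu_n)$ only lowers the primal infimum, giving the primal half of \eqref{eq:MMOT_lower}, and strong duality for each MMOT (Kellerer's theorem under \ref{A1}--\ref{A3}) converts this into the full chain displayed there. An equivalent dual-side verification, which also previews the construction needed in (c), is to take any tree-feasible $(\bar{f}_j)$, set $f_i:=\sum_{j:i(j)=i}\bar{f}_j$, and check $\sum_i f_i(x_i)=\sum_k\bar{f}_k(x_{i(k)})\leqslant\bar{c}(\bar{T}(\boldsymbol{x}))=c(\boldsymbol{x})$ together with the matching objective, since $\mu_j=\mu_{i(j)}$.

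For (c) the hard direction is $V_{\mathrm{orig}}\leqslant V_{\mathrm{tree}}$, as (b) already delivers the reverse. My plan is to work on the dual side: starting from the $C^{2,\alpha}$ original-optimal tuple $(f_i)$, build a tree-feasible tuple $(\bar{f}_j)$ with $\sum_{j:i(j)=i}\bar{f}_j=f_i$ and $\sum_j\bar{f}_j(y_j)\leqslant\bar{c}(y_1,\ldots,y_n)$ pointwise on all of $X_1\times\cdots\times X_n$, not just on the diagonal image of $\bar{T}$. The $C^{2,\alpha}$ regularity is what makes this off-diagonal extension possible: for each tree edge it renders the Brenier-type pushforward $S_{\bar{f}_j^c}$ from \eqref{eq:pf_map} well-defined and $C^{1,\alpha}$, so that the first-order optimality conditions satisfied on the support of the original optimal plan propagate along tree paths. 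Concretely, I would root $\bar{G}$ at any node and walk outward edge by edge, allocating a share of $f_{i(j)}$ to each duplicate $j$ through an edge-wise $c_{jk}$-transform update, then verify the global pointwise inequality using the strict convexity of $h_{ij}$ from \ref{A3} together with a second-order Taylor argument off the diagonal. The single hardest step is this global off-diagonal feasibility check: diagonal feasibility is immediate from the original dual constraint, but extending $\sum_j\bar{f}_j\leqslant\bar{c}$ to tuples where duplicates disagree is exactly where strict convexity and $C^{2,\alpha}$ smoothness together come into play. If the dual route proves recalcitrant, an alternative primal strategy is to show that any tree-optimal coupling must concentrate on the diagonal subset $\mathcal{Q}_2^{(n)}$ under these regularity hypotheses, and then apply the generalized gluing lemma \cref{lem:gluing} to recover a matching $P^{(m)}\in\Gamma(\mu_1,\ldots,\mu_m)$ with identical cost. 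Once $V_{\mathrm{orig}}=V_{\mathrm{tree}}$ is secured, the identity $f_i=\sum_{j:i(j)=i}\bar{f}_j$ follows automatically, as the tuple so defined is original-dual feasible with optimal objective and $c$-conjugate Kantorovich potentials are essentially unique.
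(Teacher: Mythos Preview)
Your treatment of parts (a) and (b) is correct and matches the paper in spirit: the paper argues iteratively, breaking one cycle at a time via a single-coordinate duplication $T(x_1,\ldots,x_m)=(x_1,\ldots,x_m,x_j)$, whereas you perform all duplications at once, but the change-of-variables and feasible-set-inclusion arguments are the same. For (b) your route is in fact more economical, since only $\mathcal{Q}_2^{(n)}\subseteq\Gamma(\mu_1,\ldots,\mu_n)$ is needed. The paper additionally invokes the generalized gluing lemma to show that $\mathcal{Q}_2$ coincides with the explicitly constrained set $\mathcal{Q}_1=\{Q:\ Q_i=\mu_i\text{ for }i\neq j,m{+}1;\ Q_{j,m+1}=(\id,\id)_\#\mu_j\}$. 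This equivalence is superfluous for the inequality in (b), but it is precisely what unlocks (c).

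For (c) your plan diverges from the paper, and the step you flag as hardest is left underspecified. The paper does \emph{not} attempt to construct a tree-feasible splitting of $(f_i)$ directly, nor does it use any Taylor or displacement-convexity argument for off-diagonal feasibility. Instead, having rewritten the original MMOT as the tree problem \emph{restricted to} $\mathcal{Q}_1$, it passes to the Lagrangian dual of this constrained problem, which introduces a \emph{bivariate} multiplier $g(x_j,x_{m+1})$ for the diagonal constraint $Q_{j,m+1}=(\id,\id)_\#\mu_j$:
\[
\sup_{\tilde f_i,\,g}\ \sum_{i\neq j,m+1}\int\tilde f_i\,\mathrm{d}\mu_i+\int g(x_j,x_j)\,\mathrm{d}\mu_j\quad\text{s.t.}\quad\sum_{i\neq j,m+1}\tilde f_i(x_i)+g(x_j,x_{m+1})\leqslant\tilde c.
\]
At optimality the constraint is saturated $Q$-a.e., so $g(x_j,x_{m+1})=\tilde c-\sum_{i\neq j,m+1}\tilde f_i(x_i)$ there; but the right-hand side has \emph{identically zero mixed partial} $\partial^2/\partial x_j\partial x_{m+1}$, because the only term containing $x_{m+1}$ is $c_{ij}(x_i,x_{m+1})$, which carries no $x_j$. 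The $C^{2,\alpha}$ hypothesis is invoked exactly here, to conclude that the optimal $g$ is additively separable, $g(x_j,x_{m+1})=\tilde f_j(x_j)+\tilde f_{m+1}(x_{m+1})$, which collapses the constrained dual into the unconstrained tree dual and gives equality. Your ``edge-wise $c$-transform allocation plus second-order Taylor'' would have to reproduce this separability conclusion from scratch, and it is not evident how strict convexity of $h_{ij}$ alone would deliver the global off-diagonal inequality; you correctly isolate the crux but miss the mechanism (Lagrangian with a two-variable multiplier, vanishing mixed partial) that the paper uses to resolve it.
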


\begin{figure}[htb!]
\hspace{0.5in}
\begin{minipage}{0.43\textwidth}
\begin{tikzpicture}[scale=0.7]
\centering
\node[circle,draw, scale=0.7] (3) at  (0,0) {$f_3,\mu_3$};
\node[circle,draw, scale=0.7] (1) at  (0,2)  {$f_1,\mu_1$};
\node[circle,draw, scale=0.7] (5) at  (2,0)  {$f_5,\mu_5$};
\node[circle,draw, scale=0.7] (2) at  (-2,0)  {$f_2,\mu_2$};
\node[circle,draw, scale=0.7] (4) at  (-2,-2)  {$f_4,\boldsymbol{\mu_4}$};
\draw (3) -- (1)--(5)--(3);
\draw (1) -- (2)--(4)--(3);
\end{tikzpicture}
\end{minipage}
\begin{minipage}{0.43\textwidth}
\begin{tikzpicture}[scale=0.7]
\node[circle,draw, scale=0.7] (3bar) at  (0,0) {$\widetilde{f}_3,\mu_3$};
\node[circle,draw, scale=0.7] (1bar) at  (0,2)  {$\widetilde{f}_1,\mu_1$};
\node[circle,draw, scale=0.7] (5bar) at  (2,0)  {$\widetilde{f}_5,\mu_5$};
\node[circle,draw, scale=0.7] (2bar) at  (-2,0)  {$\widetilde{f}_2,\mu_2$};
\node[circle,draw, scale=0.7] (4bar) at  (-2,-2)  {$\widetilde{f}_4,\boldsymbol{\mu_4}$};
\node[circle,draw, scale=0.7] (6bar) at  (0,-2)  {$\widetilde{f}_6,\boldsymbol{\mu_4}$};
\draw (3bar) -- (1bar) -- (5bar) -- (3bar) -- (6bar);
\draw (3bar) -- (1bar) -- (2bar) -- (4bar);
\end{tikzpicture}
\end{minipage}
\caption{Breaking one cycle in the left graph results in the right graph with an additional node.   \Cref{thm:unroll} repeats this process to obtain an equivalent MMOT with a tree structure.\label{fig:unroll}}
\end{figure}
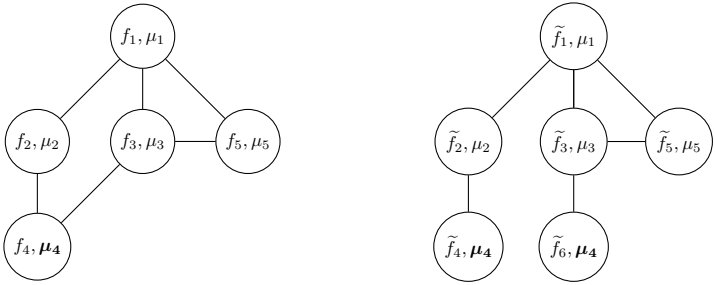

\begin{proof}
We prove it by iteration. Assume there is a cycle in $G=(V,E)$ and we can remove an edge $e=(i,j)$ to break the cycle. Let $c(x_1,\cdots,x_m)=c_{ij}(x_i,x_j)+d(x_1,\cdots,x_m)$ and $\widetilde{c}(x_1,\cdots,x_{m+1})=c_{ij}(x_i,x_{m+1})+d(x_1,\cdots,x_m)$. By defining a map: $T:X_1\times\cdots\times X_m\mapsto X_1\times \cdots\times X_{m}\times X_{m+1}$ with $X_{m+1}=X_j$, given by $T(x_1,\cdots,x_j,\cdots,x_m)=(x_1,\cdots,x_j,\cdots,x_m,x_j)$, we note that: $c=\widetilde{c}\circ T$, thus for any $P\in \P(X_1\times\cdots\times X_m)$, we have
\begin{equation}
\label{eq:composite-ot}
    \int c(x_1,\cdots,x_m)\mathrm{d}P = \int \widetilde{c}\circ T \mathrm{d}P = \int \widetilde{c}\mathrm{d}[\push{T}{P}].
\end{equation}
Note that this unrolling process preserves the number of edges, thus repeat this process and one can obtain the map $\bar{T}$ and the cost function $\bar{c}(x_1,\cdots,x_n)$ that corresponds to an undirected tree $\bar{G}=(\bar{V},\bar{E})$ with $\abs{\bar{E}}=n-1$ and $\abs{\bar{V}}=n$. By change of variables, we prove the part (a):
\begin{equation*}
  \inf_{P^{(m)}\in\Gamma(\mu_1,\cdots,\mu_m)} \int c(x_1,\cdots,x_m)\mathrm{d}P^{(m)}=\inf_{P^{(n)}\in\mathcal{Q}^{(n)}_2} \int\bar{c}(x_1,\cdots,x_n) \mathrm{d}P^{(n)}.
\end{equation*}

Now, we go back to unroll for one step and prove part (b) by iteration. Consider the following two subsets $\mathcal{Q}_1, \mathcal{Q}_2$ of couplings in $\P(X_1\times\cdots\times X_{m}\times X_{j})$, where
\begin{equation*}
\begin{aligned}
    &\mathcal{Q}_1=\left\{Q  \mid Q_i=\mu_i\textrm{ for all }i\notin\{j,m+1\}; Q_{j,m+1}=\push{(\id,\id)}{\mu_j}\right\},\\
    &\mathcal{Q}_2=\left\{Q=\push{T}{P}\in \P(X_1\times\cdots\times X_{m}\times X_{j})\mid P\in\Gamma(\mu_1,\cdots,\mu_m)\right\}.
\end{aligned}
\end{equation*}
Recall $Q_i$ is the shorthand notation for $\push{\pi_i}{Q}$, analogously with $Q_{j,m+1}$.

We first show that these two sets are equivalent. 

For a coupling $Q\in \mathcal{Q}_2$, when $i\notin\{j,m+1\}$ and for any Borel sets $A_i$ in $X_i$, we have 
\begin{equation*}
    Q_i(A_i)=\push{\pi_i}{Q}(A_i)=\push{\pi_i}{(\push{T}{P})}(A_i)=P(T^{-1}\circ \pi_i^{-1}(A_i))=\mu_i(A_i).
\end{equation*}
Also, for any Borel sets $A_j$ in $X_j$ and $A_{m+1}$ in $X_{m+1}=X_j$:
\begin{align*}
Q_{j,m+1}(A_j\times A_{m+1})&=\push{(\pi_j,\pi_{m+1})}{Q}(A_j\times A_{m+1})\\
&=P(T^{-1}\circ (\pi_j,\pi_{m+1})^{-1} (A_j\times A_{m+1}))\\
&=P(T^{-1}(X_1\times\cdots \times A_j\times X_{j+1}\times\cdots \times X_m\times A_{m+1}))\\
&=P(X_1\times\cdots \times (A_j \cap A_{m+1})\times\cdots \times X_{m})\\
&=\mu_j(A_j\cap A_{m+1})=\push{(\id,\id)}{\mu_j}(A_j\times A_{m+1}).
\end{align*}
Thus $Q\in\mathcal{Q}_1$, showing that $\mathcal{Q}_2\subseteq\mathcal{Q}_1$.

On the other hand, consider a coupling $Q\in\mathcal{Q}_1$.   Comparing with \Cref{lem:gluing}, we have $(X_i,\mu_i)$, take $Y_i=X_i$ for $i\neq j$ and $Y_j=X_j\times X_{m+1}$ with $X_{m+1}=X_j$, thus $\boldsymbol{Y}=X_1\times \cdots \times(X_j\times X_{m+1})\times \cdots X_m$. For $i\neq j$, define $\phi_i=\id:X_i\mapsto Y_i$ and $\phi_j:X_j\mapsto Y_j$ by $\phi_j=(\id,\id)$. Up to a permutation, we treat $Q\in\P(X_1\times\cdots\times X_m\times X_j)$ as $Q\in \P(\boldsymbol{Y})$, thus by the definition of $\mathcal{Q}_1$, we have:
\begin{equation*}
    \push{\phi_i}{\mu_i}=\push{\proj_i}{Q};\qquad\textrm{and}\qquad
    \push{(\id,\id)}{\mu_j}=\push{\proj_j}{Q}
\end{equation*}
Thanks to \Cref{lem:gluing}, there exists a $P\in\P(\boldsymbol{X})$ such that:
\begin{equation*}
\left\{
\begin{aligned}
    &\push{\proj_i}{P}=\mu_i\qquad\textrm{for all }i;\\
    &\push{(\phi_1\circ \proj_1,\cdots,\phi_j\circ \proj_j,\cdots,\phi_m\circ \proj_m)}{P}=Q.
\end{aligned}
\right.
\end{equation*}
Note that
\begin{equation*}
\begin{aligned}
    &(\phi_1\circ \proj_1,\cdots,\phi_j\circ \proj_j,\cdots,\phi_m\circ \proj_m)(x_1,\cdots,x_j,\cdots,x_m)\\
    =&(\phi_1(x_1),\cdots,\phi_j(x_j),\cdots,\phi_m(x_m))=(x_1,\cdots,(x_j,x_j),\cdots,x_m),
\end{aligned}
\end{equation*}
which is a permutation of $T$. As we prove the existence of $P\in \Gamma(\mu_1,\cdots,\mu_m)$, thus $Q\in\mathcal{Q}_2$ and subsequently $\mathcal{Q}_1\subseteq \mathcal{Q}_2$.  Combined with the discussion above, this implies that $\mathcal{Q}_1=\mathcal{Q}_2$.

Using this equivalence and \eqref{eq:composite-ot} results in 
\begin{equation*}
\begin{aligned}
    \inf_{P}\int c(x_1,\cdots,x_m)\mathrm{d}P
    =\inf_{Q\in \mathcal{Q}_2}\int \widetilde{c}(x_1,\cdots,x_{m+1})\mathrm{d}Q
    =\inf_{Q\in\mathcal{Q}_1}\int \widetilde{c}(x_1,\cdots,x_{m+1})\mathrm{d}Q.
    \end{aligned}
\end{equation*}

On one hand, by \Cref{subsec:mmot-dual} we have the strong duality for the original MMOT under the cost $c(x_1,\cdots,x_m)$:
\begin{equation}
    \label{eq:duality-ori-ot}
    \inf_{P\in\Gamma(\mu_1,\cdots,\mu_m)}\int c(x_1,\cdots,x_m)\mathrm{d}P= \sup_{f_1+\cdots+f_m\leqslant c}\sum_{i=1}^m \int f_i(x_i)\mathrm{d}\mu_i.
\end{equation}
On the other hand, we also have the strong duality for the new MMOT under the cost $\widetilde{c}(x_1,\cdots,x_{m+1})$ for $\mu_{m+1}=\mu_j$:
\begin{equation}
    \label{eq:duality-new-ot}
\inf_{Q\in\Gamma(\mu_1,\cdots,\mu_m,\mu_{m+1})}\int \widetilde{c}(x_1,\cdots,x_{m+1})\mathrm{d}Q= \sup_{\widetilde{f}_1+\cdots+\widetilde{f}_{m+1}\leqslant \widetilde{c}}\sum_{i=1}^{m+1} \int \widetilde{f}_i(x_i)\mathrm{d}\mu_i.
\end{equation}
Due to the fact that $\mathcal{Q}_1\subset \Gamma(\mu_1,\cdots,\mu_m,\mu_j)$, we have:
\[\inf_{P}\int c\mathrm{d}P=\inf_{Q\in\mathcal{Q}_1}\int \widetilde{c}(x_1,\cdots,x_{m+1})\mathrm{d}Q \geqslant \inf_{Q\in\Gamma(\mu_1,\cdots,\mu_m,\mu_j)}\int \widetilde{c}(x_1,\cdots,x_{m+1})\mathrm{d}Q.\]
By iterating the unrolling from $m+1$ nodes to $n$ nodes, we prove the part (b).

The characteristic function of the set $\mathcal{Q}_1$ is given by:
\begin{align*}
&\sup_{\widetilde{f}_i} \sum_{i\not\in\left\{j,m+1\right\}} \int_{X_i}\widetilde{f}_i\mathrm{d}\mu_i - \int_{X_1\times \cdots \times X_m\times X_{m+1}}\left(\sum_{i\not\in \left\{j,m+1\right\}}\widetilde{f}_i(x_i)\right)\mathrm{d}Q \\
&+ \sup_{g}\int_{X_j\times X_{m+1}} g(x_j,x_{m+1})\mathrm{d}[\push{(\mathrm{id},\mathrm{id})}{\mu_j}] -\int_{X_1\times \cdots\times X_m \times X_{m+1}}g(x_j,x_{m+1})\mathrm{d}Q\\
=&\left\{
\begin{aligned}
&0\qquad & Q\in\mathcal{Q}_1;\\
&+\infty\qquad & Q\in \mathcal{M}_+(X_1\times\dots\times X_m\times X_{m+1})\backslash \mathcal{Q}_1.
\end{aligned}\right.
\end{align*}
among all bounded and continuous functions $\widetilde{f}_i \in C_b(X_i)$ and $g\in C_b(X_j\times X_{m+1})$ for $X_{m+1}=X_j$. By noting that 
\begin{align*}
\int_{X_j\times X_{m+1}} g(x_j,x_{m+1})\mathrm{d}[\push{(\mathrm{id},\mathrm{id})}{\mu_j}]=\int_{X_j}g(x_j,x_j)\mathrm{d}\mu_j,
\end{align*}
we write the following into the Lagrangian form:
\begin{align}
\label{eq:od_dual}
    &\inf_{Q\in\mathcal{Q}_1} \int \widetilde{c}(x_1,\cdots,x_{m+1})\mathrm{d}Q\notag\\
    =&\inf_{Q} \int \widetilde{c}\mathrm{d}Q + \sup_{\widetilde{f}_i, g} \sum_{i\not\in\left\{j,m+1\right\}}\int \widetilde{f}_i\mathrm{d}\mu_i +\int g\mathrm{d}[\push{(\mathrm{id},\mathrm{id})}{\mu_j}]  - \int (g+\sum_{i\not\in \left\{j,m+1\right\}} \widetilde{f}_i )\mathrm{d}Q\notag\\
    =&\left\{
    \begin{aligned}
    &\sup_{\widetilde{f}_i, g} \sum_{i\not\in \left\{j,m+1\right\}}\int_{X_i}\widetilde{f}_i\mathrm{d}\mu_i + \int_{X_j} g(x_j,x_j)\mathrm{d}\mu_j\\
    &\textrm{s.t~} \sum_{i\not\in\left\{j,m+1\right\}} \widetilde{f}_i(x_i)+g(x_j,x_{m+1})\leqslant \widetilde{c}(x_1,\cdots,x_{m+1})
    \end{aligned}\right.
\end{align}

When $g(x_j,x_{m+1})$ is a additively separable function, that is, there exists bounded continuous function $\widetilde{f}_j, \widetilde{f}_{m+1}$ such that $g(x_j,x_{m+1})=\widetilde{f}_j(x_j)+\widetilde{f}_{m+1}(x_{m+1})$, then \eqref{eq:od_dual} turns to be the new MMOT \eqref{eq:duality-new-ot}. Once again, we see that the new MMOT is a lower bound.

For the optimal $Q\in\mathcal{Q}_1$ and the optimal dual solution $(g,\widetilde{f}_1,\cdots,\widetilde{f}_m)$, we have:
\[\sum_{i\not\in \left\{j,m+1\right\}} \widetilde{f}_i(x_i)+g(x_j,x_{m+1})=\widetilde{c}(x_1,\cdots,x_{m+1})=d(x_1,\cdots,x_m)+c_{ij}(x_i,x_{m+1}),\]
$Q$-almost everywhere. That is:
\[g(x_j,x_{m+1})=d(x_1,\cdots,x_m)+c_{ij}(x_i,x_{m+1})-\sum_{i\not\in \left\{j,m+1\right\}} \widetilde{f}_i(x_i)\qquad\textrm{a.e.~} Q.\]
Due to the additional assumption on the differentiability in the part (c), we note that the mixed partial derivatives $\frac{\partial^2}{\partial x_j\partial x_{m+1}}$ are zero on the right hand side, thus at the optimality, $g(x_j,x_{m+1})$ is additively separable. Therefore, by \eqref{eq:duality-ori-ot}, \eqref{eq:duality-new-ot} and \eqref{eq:od_dual}, we have:
\begin{equation}
\label{eq:primal_eq}
\begin{aligned}
    \inf_{P}\int c\mathrm{d}P&=\inf_{Q\in\mathcal{Q}_1}\int \widetilde{c}(x_1,\cdots,x_{m+1})\mathrm{d}Q\\
    &=\sup_{g+\widetilde{f}_1+\cdots+\widetilde{f}_m\leqslant \widetilde{c}(x_1,\cdots,x_{m+1})}\sum_{i\not\in \left\{j,m+1\right\}}\int_{X_i}\widetilde{f}_i\mathrm{d}\mu_i + \int_{X_j} g(x_j,x_j)\mathrm{d}\mu_j\\
    &=\sup_{\widetilde{f}_1+\cdots+\widetilde{f}_{m+1}\leqslant \widetilde{c}}\sum_{i=1}^{m+1}\int_{X_i}\widetilde{f}_i\mathrm{d}\mu_i\\
    &=\inf_{Q\in\Gamma(\mu_1,\cdots,\mu_m,\mu_j)}\int \widetilde{c}(x_1,\cdots,x_{m+1})\mathrm{d}Q.
\end{aligned}
\end{equation}
which sets up the equivalence between the original MMOT and the new MMOT. 

Furthermore, let $(f_i)_{i=1}^m$ and $(\widetilde{f}_i)_{i=1}^{m+1}$ be the Kantorovich potentials. Due to \eqref{eq:duality-ori-ot}, \eqref{eq:duality-new-ot} and \eqref{eq:primal_eq}, we have
\begin{equation*}
\sum_{i=1}^m \int_{X_i}f_i\mathrm{d}\mu_i
=\sum_{i=1}^{m+1}\int_{X_i}\widetilde{f}_i\mathrm{d}\mu_i=\sum_{i\notin\left\{ j,m+1\right\}}\int_{X_i}\widetilde{f}_i\mathrm{d}\mu_i + \int_{X_j}(\widetilde{f}_j(x_j) + \widetilde{f}_{m+1}(x_j))\mathrm{d}\mu_j.
\end{equation*}
Therefore we can define an optimal dual solution $(f_i)_{i=1}^m$ to the original problem, in terms of the optimal dual solution to the new problem:
\begin{equation*}
\left\{
\begin{aligned}
    &f_i=\widetilde{f}_i,\qquad &i\neq j;\\
    &f_j=\widetilde{f}_j+\widetilde{f}_{m+1}\qquad &i=j.
\end{aligned}
\right.
\end{equation*}
The above process can be repeated to remove all cycles in the tree.  \Cref{lem:duplicate} guarantees that only a finite number or repetitions are required, thus completing the proof.
\end{proof}
\begin{remark}
As the regularity theory of the Kantorovich potentials is in general subtle (see Section 6.2 in \cite{Ambrosio2008GF} for example), we impose a strong assumption on the potentials directly in the part (c), in order to obtain the equivalence. For the cost $c_{ij}(x_i,x_j)=\abs{x_i-x_j}^2$, by the Caffarelli's regularity theory, the Kantorovich potentials are $C^{2,\alpha}$ and thus the additional assumption in the part (c) is relieved.  We would like to seek for necessary conditions or weaker sufficient conditions on the cost function for the equivalence in the future work.
\end{remark}

\section{Computational Approach}
\label{sec:alg}

To solve the MMOT problem, we need to maximize the dual functional
\begin{equation}
\label{eq:MMOT_functional}
    I(f_1,\cdots, f_m) = \sum_{i=1}^m \int f_i \mathrm{d}\mu_i
\end{equation}
among dual variables satisfying $\sum_{i=1}^m f_i(x_i)\leqslant c(x_1,\cdots,x_m)$.  Similar to the two-marginal approach of \cite{Jacobs2020BF}, by leveraging $c$-transform to get rid of the constraint, we will use gradient ascent on the remaining $(m-1)$ dual variables in the space $\dot{H}^1$.  As shown below, the graphical interpretation of the MMOT problem will enable fast $c$-transforms and gradient updates.  

On a high level, our algorithm consists of three steps:
\begin{enumerate}[label =\Roman*)]
    \item We first construct an undirected graph with possible cycles based on the cost function.
    \item We follow \Cref{thm:unroll} and ``unroll'' the cyclic graph into an undirected tree, at the cost of adding duplicate nodes.
    \item We solve the unrolled problem with the gradient ascent steps described in \Cref{sec:alg}.
\end{enumerate}

We have discussed Steps I and II in \Cref{sec:graph}. As illustrated in \Cref{fig:direct_graph}, by picking an arbitrary node as the root node and traversing the undirected tree with a breadth first search to add directionality, we obtain a directed rooted tree. Our primary computational task in this section is then finding the solution to MMOT problems with tree representations.

\begin{figure}[htb]
\centering
    \hfill
\begin{subfigure}{.48\linewidth}
    \centering
    \begin{forest}
    for tree={
    edge = {stealth-},
    circle,
    minimum size=5mm,
    inner sep=0pt,
    draw,
    math content,
    tier/.wrap pgfmath arg={tier #1}{level()},
    anchor=center
    },
    [f_1,name=L0, [f_2,name=L1,[f_3,name=L2]], [f_4]]
    \path let \p1 = (L0) in node  at (1.5,\y1) {$L_1=0$};
    \path let \p1 = (L1) in node  at (1.9,\y1) {$L_2=L_4=1$};
    \path let \p1 = (L2) in node  at (1.5,\y1) {$L_3=2$};
    \end{forest}
    \caption{Tree variant of (\Cref{fig:UG_tree}) with root note $f_1$.}\label{fig:DG_tree1}
\end{subfigure}
   \hfill
\begin{subfigure}{.48\linewidth}
    \centering
    \begin{forest}
    for tree={
    edge = {stealth-},
    circle,
    minimum size=5mm,
    inner sep=0pt,
    draw,
    math content,
    tier/.wrap pgfmath arg={tier #1}{level()},
    anchor=center
    },
    [f_4,name=L0,
        [f_1, name=L1, [f_2,name=L2, [f_3, name=L3]]
        ]
    ]
    \path let \p1 = (L0) in node  at (1.5,\y1) {$L_4=0$};
        \path let \p1 = (L1) in node  at (1.5,\y1) {$L_1=1$};
        \path let \p1 = (L2) in node  at (1.5,\y1) {$L_2=2$};
        \path let \p1 = (L3) in node  at (1.5,\y1) {$L_3=3$};
    \end{forest}
    \caption{Another tree variant of (\Cref{fig:UG_tree}) with root node $f_4$.}\label{fig:DG_tree2}
\end{subfigure}

    \hfill
    \begin{subfigure}{.48\linewidth}
        \centering
        \begin{forest}
        for tree={
        edge = {-},
        circle,
        minimum size=5mm,
        inner sep=0pt,
        draw,
        math content,
        tier/.wrap pgfmath arg={tier #1}{level()},
        anchor=center
        },
        [\mu_1
      [\mu_2 
       [\mu_3]
      ]
      [\mu_4, [\mu_3]]
    ]
        \end{forest}
        \caption{Unrolled variant of (\Cref{fig:UG_cycle}) obtained by duplicating $\mu_3$.}\label{fig:UG_unrolled}
    \end{subfigure}
    \hfill
    \begin{subfigure}{.48\linewidth}
        \centering
        \begin{forest}
        for tree={
        edge = {stealth-},
        circle,
        minimum size=5mm,
        inner sep=0pt,
        draw,
        math content,
        tier/.wrap pgfmath arg={tier #1}{level()},
        anchor=center
        },
        [f_1,name=L0, [f_2,[f_3]], [f_4, name=L1, [f_5,name=L2]]]
        \path let \p1 = (L0) in node  at (1.5,\y1) {$L_1=0$};
        \path let \p1 = (L1) in node  at (1.9,\y1) {$L_2=L_4=1$};
        \path let \p1 = (L2) in node  at (1.9,\y1) {$L_3=L_5=2$};
        \end{forest}
        \caption{Directed variant of (\Cref{fig:UG_unrolled}) showing dual variables with the root node at $f_1$.}\label{fig:DG_tree3}
    \end{subfigure}

\caption{Directed tree representations of MMOTs with $m=4$ marginals. We reserve dual variables $(f_i)$ for nodes in directed trees. The first row is two possible directed tree representations of cost in \Cref{fig:UG_tree}. These are constructed by selecting a particular root node (either $f_1$ or $f_4$ in these examples) and then traversing the graph with a breadth first search to add directionality to each edge. The second row demonstrates how our algorithm works from \Cref{fig:UG_cycle}. The layer $L_i$ of node $i$ is needed to provide an ordering in \Cref{alg:root-tree}. \label{fig:direct_graph}}
\end{figure}

\subsection{Illustrative Example}
\label{subsec:illustrative}
To motivate our general gradient ascent approach, first consider a simple MMOT problem with three marginals $\mu_1,\mu_2,\mu_3$ and cost 
\[
c(x_1,x_2,x_3)=c_{12}(x_1,x_2) + c_{23}(x_2,x_3).
\] 
We will need to derive gradients of the dual objective (\eqref{eq:MMOT_functional} with $m=3$) with respect to dual variables.   For this particular example, it can be accomplished by making an analogy between the MMOT problem and multiple two-marginal OT problems.  Due to the gluing lemma, the primal MMOT problem under this cost is analogous to the sum of two OT problems:
\begin{equation*}
    \inf_{P\in \Gamma(\mu_1,\mu_2,\mu_3)} \int c \mathrm{d}P = \inf_{P_{12}\in \Gamma(\mu_1,\mu_2)} \int c_{12} \mathrm{d}P_{12} 
    +\inf_{P_{23}\in \Gamma(\mu_2,\mu_3)} \int c_{23}\mathrm{d}P_{23}.
\end{equation*}
The dual of the MMOT problem is 
\begin{equation*}
\left\{
    \begin{aligned}
\sup_{f_1,f_2,f_3} &  \int f_1(x_1) \mathrm{d}\mu_1 + \int f_2(x_2) \mathrm{d}\mu_2 + \int f_3(x_3) \mathrm{d}\mu_3,\\
\text{s.t.    } & f_1(x_1) + f_2(x_2) + f_3(x_3) \leqslant c(x_1,x_2,x_3),
    \end{aligned}\right.
\end{equation*}
and the dual for the summed two-marginal problems is 
\begin{equation*}
\left\{
    \begin{aligned}
        \sup_{u_1,v_1} & \left[\int u_1(x_1) \mathrm{d}\mu_1 + \int v_1(x_2) \mathrm{d}\mu_2\right] + \sup_{u_2,v_2}\left[\int v_2(x_2) \mathrm{d}\mu_2 + \int u_2(x_3) \mathrm{d}\mu_3\right],\\
        \text{s.t.    } & u_1(x_1) + v_1(x_2) \leqslant c_{12}(x_1,x_2);\\
        & u_2(x_3) + v_2(x_2) \leqslant c_{23}(x_2,x_3),
    \end{aligned}
\right.
\end{equation*}
where $u_1,v_1$ are loading/unloading prices for the OT problem under cost $c_{12}$ and $u_2,v_2$ are loading/unloading prices for the OT problem under cost $c_{23}$.  

\subsubsection{Using $f_2 = (f_1+f_3)^c$}\label{subsec:illustrative1}
In both dual problems, the constraints can be accounted for by using $c$-transforms to define one dual variable in terms of the others.   Assume $f_2(x_2) = (f_1+f_3)^c(x_2)$, $v_1(x_2) = u_1^{c_{12}}(x_2)$, and $v_2(x_2)=u_2^{c_{23}}(x_2)$,  then the dual objective $I_2(f_1,f_3)$ is
\begin{equation*}
    I_2(f_1,f_3) = \int f_1(x_1) \mathrm{d}\mu_1 + \int (f_1+f_3)^c(x_2) \mathrm{d}\mu_2 + \int f_3(x_3) \mathrm{d}\mu_3,
\end{equation*}
and the combined two-marginal dual problems become 
\begin{equation}
\label{eq:combined_dual}
    \sup_{u_1}\left[\int u_1(x_1) \mathrm{d}\mu_1 + \int u_1^{c_{12}}(x_2) \mathrm{d}\mu_2\right] + \sup_{u_2}\left[\int u_2^{c_{23}}(x_2) \mathrm{d}\mu_2 + \int u_2(x_3) \mathrm{d}\mu_3\right].   
\end{equation}
These two expressions have exactly the same form because the pairwise structure of the cost yields $(f_1+f_3)^c(x_2) = f_1^{c_{12}}(x_2) + f_3^{c_{23}}(x_2)$, which implies that 
\begin{equation}
\label{eq:expanded_toy_dual}
\sup_{f_1,f_3} \int f_1(x_1) \mathrm{d}\mu_1 + \int f_1^{c_{12}}(x_2) \mathrm{d}\mu_2 + \int f_3^{c_{23}}(x_2) \mathrm{d}\mu_2 + \int f_3(x_3) \mathrm{d}\mu_3.
\end{equation}
Importantly, this implies that the two-marginal ascent directions in \eqref{eq:H1gradient} can be adapted to define ascent directions for $f_1$ and $f_2$ in \eqref{eq:expanded_toy_dual}. In particular, let $I_2(f_1,f_3) = \int f_1 \mathrm{d}\mu_1 + \int (f_1+f_3)^c \mathrm{d}\mu_2 + \int f_3 \mathrm{d}\mu_3$ denote the dual MMOT objective with $f_2$ defined through the $c$-transform.  Then the gradients take the form
\begin{equation}
    \label{eq:illustrative_grads1}
    \begin{aligned}
    \nabla_{\dot{H}^1}I_2(f_1;f_3) &= (-\Delta)^{-1}(\mu_1-\push{S_{f_1^{c_{12}}}}{\mu_2});\\
    \nabla_{\dot{H}^1}I_2(f_3;f_1) &= (-\Delta)^{-1}(\mu_3-\push{S_{f_3^{c_{23}}}}{\mu_2}).
    \end{aligned}
\end{equation}

The identical relationship between \eqref{eq:expanded_toy_dual} and \eqref{eq:combined_dual} relied on the separable property $(f_1+f_3)^c(x_2) = f_1^{c_{12}}(x_2) + f_3^{c_{23}}(x_2)$. Using the graphical interpretation of MMOT developed in \Cref{sec:graph}, we will later show that this corresponds to the fact that root node $f_2$ is the $c$-transform of leaf nodes $f_1$ and $f_3$ in a rooted tree, as shown in \Cref{fig:update-leaf}. A generalization of this is also considered in \Cref{lem:root-ctrans}.

\subsubsection{Using $f_3 = (f_1+f_2)^c$}
 More care is needed to make an analogy between the MMOT problem and two-marginal problems for different orderings of the $c$-transform.   For example, consider the dual problem when $f_3$ is defined through the $c$-transform of $f_1+f_2$. In this case, the dual objective $I_3(f_1,f_2)$ takes the form 
\[
    I_3(f_1,f_2)=\int f_1(x_1) \mathrm{d}\mu_1 + \int f_2(x_2) \mathrm{d}\mu_2 + \int (f_1+f_2)^c(x_3) \mathrm{d}\mu_3.
\]
Expanding the $c$-transform is more difficult:
\begin{equation}
    \label{eq:f12_to_3}
\begin{aligned}
    (f_1+f_2)^c(x_3) &= \inf_{x_1,x_2} c_{12}(x_1,x_2) + c_{23}(x_2,x_3) - f_1(x_1) - f_2(x_2)\\
    &= \inf_{x_2} c_{23}(x_2,x_3) - f_2(x_2) + f_1^{c_{12}}(x_2)\\
    &= (f_2 - f_1^{c_{12}})^{c_{23}}(x_3),
\end{aligned}
\end{equation}
but still results in a form that can be compared with \eqref{eq:combined_dual}:
\[
    \begin{aligned}
    \sup_{f_1,f_2} & \int f_1(x_1) \mathrm{d}\mu_1 + \int f_2(x_2) \mathrm{d}\mu_2 + \int (f_2 - f_1^{c_{12}})^{c_{23}}(x_3) \mathrm{d}\mu_3\\
    = \sup_{f_1,f_2} & \int f_1 \mathrm{d}\mu_1 + \int f_1^{c_{12}}(x_2) \mathrm{d}\mu_2 + \int (f_2 - f_1^{c_{12}})(x_2) \mathrm{d}\mu_2 + \int (f_2 - f_1^{c_{12}})^{c_{23}} \mathrm{d}\mu_3.
    \end{aligned}
\]
Using the same analogy with \eqref{eq:H1gradient} as above, the gradients of $I_3$ take the form
\begin{subequations}
    \label{eq:illustrative_grads2}
    \begin{align}
        \nabla_{\dot{H}^1}I_3(f_1;f_2) &= (-\Delta)^{-1}(\mu_1-\push{S_{f_1^\prime}}{\mu_2});    \label{eq:illustrative_grads2a}\\
        \nabla_{\dot{H}^1}I_3(f_2;f_1) &= (-\Delta)^{-1}(\mu_2-\push{S_{f^\prime _2}}{\mu_3}),    \label{eq:illustrative_grads2b}
    \end{align}
\end{subequations}
where $f^\prime_1 = (f_1)^{c_{12}}$, $f^\prime_2 = (f_2 - f_1^{c_{12}})^{c_{23}}$ and $S_{f}(x)$ is defined in \eqref{eq:pf_map}.  These identities are made more rigorous in \cref{lem:first_var} in the supplementary document.

The need to include $f_1^{c_{12}}$ in the definition of $f_2^\prime$ stems from the fact that there is no direct pairwise cost relating $x_1$ and $x_3$. The dual variable $f_3$ and $f_1$ are therefore only indirectly coupled through $f_2$, which is illustrated in \Cref{fig:update-mid}.  This is in contrast to \Cref{subsec:illustrative1}, where the root node $f_2$ was directly coupled with $f_1$ and $f_3$.   As we will show in \eqref{eq:net_potential} below, expressions similar to $f_2^\prime$ can be used to propagate information through pairwise MMOT problems with an arbitrary number of marginal distributions.

\begin{figure}[htb]
\centering
\begin{subfigure}{.24\linewidth}
    \centering
    \begin{forest}
    for tree={
      edge = {-},
      circle,
      minimum size=5mm,
      inner sep=0pt,
      draw,
      math content,
      tier/.wrap pgfmath arg={tier #1}{level()},
      anchor=center
        },
  [f_2
  [f_1]
  [f_3]
  ]
\end{forest}
    \caption{Undirected tree for $c=c_{12}+c_{23}$}\label{fig:update-undirected}
\end{subfigure}
\hfill
\begin{subfigure}{.24\linewidth}
    \centering
    \begin{forest}
    for tree={
      edge = {stealth-},
      circle,
      minimum size=5mm,
      inner sep=0pt,
      draw,
      math content,
      tier/.wrap pgfmath arg={tier #1}{level()},
      anchor=center
        },
  [f_2
  [f_1]
  [f_3]
  ]
\end{forest}
    \caption{A directed tree for $c=c_{12}+c_{23}$}\label{fig:update-leaf}
\end{subfigure}
    \hfill
\begin{subfigure}{.24\linewidth}
    \centering
    \begin{forest}
    for tree={
      edge = {stealth-},
      circle,
      minimum size=5mm,
      inner sep=0pt,
      draw,
      math content,
      tier/.wrap pgfmath arg={tier #1}{level()},
      anchor=center
        },
  [f_3
    [f_2 [f_1]
    ]
  ]
  ]
\end{forest}
    \caption{Another directed tree for $c=c_{12}+c_{23}$}\label{fig:update-mid}
\end{subfigure}
    \hfill
\begin{subfigure}{.24\linewidth}
      \centering
      \begin{forest}
      for tree={
          edge = {stealth-},
          circle,
          minimum size=7mm,
          inner sep=0pt,
          draw,
          math content,
          tier/.wrap pgfmath arg={tier #1}{level()},
          anchor=center
          },
      [
      [f_i
        [f_{j_1}]
        [f_{j_2}]
        [f_{j_k}]
      ]
      ]
\end{forest}
 \caption{Example updating a downstream node of leaf nodes}\label{fig:update-nonleaf}
\end{subfigure}
\caption{Following the illustrative example in \Cref{subsec:illustrative}, directed trees play a key role in defining ascent directions and computing $c$-transforms.  As shown by (b) and (c) however, there are multiple directed variants of any undirected tree.}  
\end{figure}

\subsection{Graphical Interpretation and General Dual Gradients}\label{subsec:general-grad}

The undirected tree in \Cref{fig:update-undirected} represents the simple three-marginal problem considered above.  Directed versions of this tree can be defined by choosing a single root node and ensuring that all edges in the tree point towards the root node.  This is shown in \Cref{fig:update-leaf} for root node $f_2$ or in \Cref{fig:update-mid} for root node $f_3$. For either of these choices, the dual variable of the root node is given by the $c$-transform. The gradient in \eqref{eq:illustrative_grads2b} has a slightly different form from \eqref{eq:illustrative_grads1} or \eqref{eq:illustrative_grads2a} because the pushforward map $S_f$ from marginal $\mu_2$ to $\mu_3$ is no longer induced by the $c$-transform of the dual variable $f_2$ purely. Instead, it is induced by $f'_2=(f_2-f_1^{c_{12}})^{c_{23}}$, which we refer to as a \textrm{net potential}. On one hand, for the optimal solution $(f_1,f_2,f_3)$, one may expect $\mu_2=\push{S_{f_3}}{\mu_3}$, and by \eqref{eq:f12_to_3}, we see how the net potential $f'_2$ is constructed. On the other hand, nodes with incoming edges will require using a net potential. Before applying the $c_{23}$-transform, a new potential $f_2^{\textrm{new}}=f_2-f_1^{c_{12}}$ is needed to account for upstream information.  \cref{lem:f_vs_u} in the supplementary document provides a detailed discussion. Loosely speaking, unlike the two-marginal OT, the dual variables in the MMOT problem are no longer purely loading/unloading prices. 

The gradients in \eqref{eq:illustrative_grads1} and \eqref{eq:illustrative_grads2} were obtained by comparing the MMOT dual problem to the sum of dual problems for independent two-marginal OT problems.  This same process can also be employed for larger problems with an arbitrary number of marginal distributions so long as the MMOT cost admits a pairwise cost as in \ref{A1}. 

Consider a directed tree with root node $r$.  Defining $f_r$ through the $c$-transform in \eqref{eq:MMOT_functional} results in a dual functional
\begin{equation*}
    I_r(f_1,\ldots, f_{r-1}, f_{r+1}, \ldots, f_m)\defeq I(f_1,\ldots, \,f_{r-1}, (\sum_{i\neq r} f_i)^c, \,f_{r+1}, \ldots, f_m).
\end{equation*}
In this more general setting and $i\neq r$, the gradient of $I_r$ with respect to $f_i$ takes the form 
\begin{equation}
    \nabla_{\dot{H}^1}I_r(f_i) = (-\Delta)^{-1}\left(\mu_i-\push{S_{f^\prime_i}}{\mu_{N^{+}(i)}}\right),
    \label{eq:general_grad}
\end{equation}
where the net potential $f^\prime_i$ at edge $(i,N^+(i))$ is recursively defined by 
\begin{equation}
    f_i^\prime = (f_i - \sum_{j\in N^{-}(i)}f_j^\prime)^{c_{iN^+(i)}},
    \label{eq:net_potential}
\end{equation}
which is the difference between the dual variable $f_i$ at node $i$ and the sum of upstream net potentials $(f'_j)$. \Cref{fig:update-nonleaf} illustrates the idea.  If node $i$ is a leaf node, the set of upstream nodes is empty $N^{-}(i)=\emptyset$ and the net potential is simply $f_i^\prime = (f_i)^{c_{iN^+(i)}}$.  

\subsection{Gradient Ascent}\label{subsec:grad-ascent}

The gradients defined by \eqref{eq:general_grad} provide a way to update each individual dual variable using gradient ascent while holding the other dual variables fixed.  This can be used to define a block coordinate ascent algorithm for the dual MMOT problem.  At iteration $k$ of the gradient ascent algorithm, the dual variable at node $i$ is updated using
\begin{equation*}
    f_i^{k+1} = f_i^{k} - \sigma \Delta^{-1}\left[\mu_i - \push{S_{f^\prime_i}}{\mu_{N^{+}(i)}} \right],
\end{equation*}
for a step size $\sigma \in \mathbb{R}$. As described in the previous section however, the dual variable at the root node is given by the $c$-transform $f_r = (\sum_{i\neq r} f_i)^c$.  The following lemma provides a mechanism for efficiently computing this $c$-transform using the same net potentials used to define those gradients.

\begin{lemma}\label{lem:root-ctrans}
For a root node $r$ and its upstream nodes $N^-(r)$, we have:
\begin{equation}
\label{eq:root_ctrans}
    f_r(x_r)= \sum_{i\in N^-(r)}f'_i(x_r).
\end{equation}
\end{lemma}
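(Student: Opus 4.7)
The plan is to unfold the definition
\[
f_r(x_r) = \Bigl(\sum_{i\neq r} f_i\Bigr)^c(x_r) = \inf_{\{x_j : j\neq r\}} \Bigl[ c(x_1,\ldots,x_m) - \sum_{i\neq r} f_i(x_i) \Bigr],
\]
and exploit the fact that the underlying graph is a tree rooted at $r$, so that removing $r$ partitions the remaining nodes into disjoint subtrees $T_l$ indexed by $l \in N^-(r)$. By assumption \ref{A1} the cost splits edge-wise, and because no edge of the tree crosses between two different subtrees, I can write
\[
c(x_1,\ldots,x_m) = \sum_{l\in N^-(r)} \Bigl[ c_{lr}(x_l,x_r) + \sum_{(u,v)\in E_l} c_{uv}(x_u,x_v) \Bigr],
\]
where $E_l$ denotes the edge set internal to $T_l$. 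Since the variables in different subtrees are independent and the dual sum $\sum_{i\neq r}f_i(x_i)$ similarly splits as $\sum_{l\in N^-(r)}\sum_{j\in T_l}f_j(x_j)$, the joint infimum factors as $\sum_{l\in N^-(r)}\inf_{\{x_j : j\in T_l\}}(\cdot)$.

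The main step is then an auxiliary claim, to be proved by induction on the depth of the subtree rooted at $i$: for every non-root node $i$ with parent $p = N^+(i)$,
\[
f'_i(x_p) = \inf_{\{x_j : j\in T_i\}} \Bigl[ c_{ip}(x_i,x_p) + \sum_{(u,v)\in E_i} c_{uv}(x_u,x_v) - \sum_{j\in T_i} f_j(x_j) \Bigr].
\]
For a leaf $i$ we have $T_i=\{i\}$ and $E_i=\emptyset$, so the right-hand side collapses to $\inf_{x_i}[c_{ip}(x_i,x_p)-f_i(x_i)] = (f_i)^{c_{ip}}(x_p)$, matching \eqref{eq:net_potential} when $N^-(i)=\emptyset$. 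For the inductive step, I plug the hypothesis into the recursion $f'_i(x_p) = \inf_{x_i}[c_{ip}(x_i,x_p) - f_i(x_i) + \sum_{j\in N^-(i)} f'_j(x_i)]$; the disjoint decompositions $T_i = \{i\} \sqcup \bigsqcup_{j\in N^-(i)} T_j$ and $E_i = \bigsqcup_{j\in N^-(i)}(\{(j,i)\}\cup E_j)$ turn the nested infimum into a single infimum over $\{x_j : j\in T_i\}$ with exactly the asserted integrand.

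Combining the decomposition of $f_r(x_r)$ with the auxiliary claim applied to each child $l \in N^-(r)$ (taking $p=r$) yields
\[
f_r(x_r) = \sum_{l\in N^-(r)} \inf_{\{x_j : j\in T_l\}}\Bigl[c_{lr}(x_l,x_r) + \sum_{(u,v)\in E_l}c_{uv}(x_u,x_v) - \sum_{j\in T_l}f_j(x_j)\Bigr] = \sum_{l\in N^-(r)} f'_l(x_r),
\]
which is \eqref{eq:root_ctrans}. No genuine obstacle is anticipated: strict convexity from \ref{A3} ensures the infima are well-defined, and the entire argument is bookkeeping on the tree structure. The only care needed is tracking that upstream $c$-transforms in the recursion \eqref{eq:net_potential} correctly absorb the child net potentials, which the inductive hypothesis provides automatically.
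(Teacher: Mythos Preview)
Your proposal is correct and follows essentially the same approach as the paper: decompose the joint infimum defining $f_r$ across the disjoint subtrees hanging off $r$, and then identify each subtree infimum with the corresponding net potential via the recursion \eqref{eq:net_potential}. The paper treats the two-layer case separately and then sketches the general case informally (``re-arranging the infimum by subtrees'' and working ``from the inside to the outside''), whereas you package the same idea as a clean induction on subtree depth with an explicit auxiliary claim; the content is identical.
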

\begin{proof}
When the rooted tree only consists of two layers, the root node and the leaf nodes. By $f_r = (\sum_{i\neq r} f_i)^c$ and the definition \eqref{eq:net_potential}, we have
\begin{align*}
    f_r(x_r)&=\inf_{\textrm{all~} y_i} c(y_1,\cdots,x_r,\cdots,y_m)-\sum_{i\in N^-(r)}f_i(y_i)\\
    &=\inf_{\textrm{all~}y_i}\left[ \sum_{i\in N^-(r)}(c_{ir}(y_i,x_r)-f_i(y_i))\right]=\sum_{i\in N^-(r)}(\inf_{y_i} c_{ir}(y_i,x_r)-f_i(y_i))\\
    &=\sum_{i\in N^-(r)}f_i^{c_{ir}}(x_r)=\sum_{i\in N^-(r)}f'_i(x_r).
\end{align*}

When the rooted tree consists of more than two layers, we may first re-arrange
\begin{equation*}
\begin{aligned}
    &f_r(x_r)=\inf_{\textrm{all~} y_i} c(y_1,\cdots,x_r,\cdots,y_m)-\sum_{i\not = r}f_i(y_i)\\
    &=\inf_{\textrm{all~}y_i}\sum_{i\in N^-(r)}[ c_{ir}(y_i,x_r)-f_i(y_i)-\sum_{j\in\textrm{Tree}(i)\atop j\not= i}f_j(y_j) +\sum_{(j,k)\in\textrm{Tree}(i)}c_{jk}(y_j,y_k)]\\
    &=\sum_{i\in N^-(r)}[\inf_{y_i} \{c_{ir}(y_i,x_r) - f_i(y_i)+\inf_{\textrm{all~}y_j\atop j\in \textrm{Tree}(i)}(\sum_{(j,k)\in\textrm{Tree}(i)}c_{jk}(y_j,y_k)-\sum_{j\in \textrm{Tree}(i) \atop j\not =i}f_j(y_j))\}]
\end{aligned}
\end{equation*}
where we denote a rooted tree with root node $r$ by $\textrm{Tree}(r)=(V,E)$. For simplicity, we slightly abuse notations: $e\in \textrm{Tree}(r)$ ($v\in \textrm{Tree}(r)$) means that an edge (a vertex) belongs to the tree with root node $r$. We can continue this work by re-arranging the infimum by subtrees, to get a nested infimum.

From the inside to the outside of the nested infimum, by noting \eqref{eq:net_potential} and recursively defining $f^{\textrm{new}}_i=f_i-\sum_{j\in N^-(i)}f'_j$ from the leaf nodes towards the root, we obtain \eqref{eq:root_ctrans}.
\end{proof}

Combinining the gradient steps in \eqref{eq:general_grad} with the root node $c$-transform in \eqref{eq:root_ctrans}, results in a method for taking a single gradient ascent step on each dual variable;  this is summarized in \Cref{alg:root-tree}.    As shown in \cite{Jacobs2020BF},  pairwise $c$-transforms can be computed efficiently using the fast Legendre transform when the marginals are discretized on a uniform grid  (see e.g., \cite{lucet1997faster}).   

\begin{algorithm2e}[htb!]
             \SetAlgoLined
             \SetKwFunction{proc}{AscentStep}
             \caption{Gradient ascent step on a rooted tree. \label{alg:root-tree}}
             \SetKwProg{myproc}{Function}{}{}
             
  \myproc{\proc{$(V,E)$, $\{f_1,\ldots,f_m\}$, $\{\mu_1,\ldots,\mu_m\}$, $r$, $\sigma$}}{
          \vspace{1mm}
          \KwData{A tree $(V,E)$ with $m$ nodes; the index $r$ of the root node, potentials $\{f_1,\ldots,f_m\}$ and measures $\{\mu_1,\ldots,\mu_m\}$ at each node; and a stepsize $\sigma$.}
            \vspace{0.1cm}
            \KwResult{Updated values of $\{f_1,\ldots,f_{m}\}$.} 
            \vspace{0.2cm}
            \tcc{Use a breadth-first search to compute the layer $L_{i}$ of node $i$.}
            $L_1,\ldots,L_m = \texttt{BFS}(V,E,r)$\;\vspace{0.2cm}
            \tcc{Find a run order $k_1,\ldots,k_m$ such that $L_{k_s}\geq L_{k_t}$ for $s<t$.}
            $[k_1,\ldots, k_m] \gets$ \texttt{reverse}( \texttt{argsort}$([L_1,\ldots,L_m])\,)$\;\vspace{0.2cm}
                \vspace{0.2cm}
            \tcc{Loop over nodes in graph.}
            \For{$i\gets1$ to $m-1$}{
                \vspace{0.1cm}
                \tcc{Update net potential.}
                $f^\prime_{k_i} \gets \left(f_{k_i} - \underset{j\in N^{-}(k_i)}{\sum} f^\prime_{j}\right)^c$ \;
                \vspace{0.1cm}
                \tcc{Take gradient step.}
                $f_{k_i} \gets f_{k_i} - \sigma\Delta^{-1}\left[\mu_{k_i}-\left(S_{f^\prime_{k_i}}\right)_{\#}\mu_{N^+(k_i)}\right]$\;
            }
            \vspace{0.2cm}
            \tcc{Set root potential to ensure potentials are admissible}
            $f_{k_m} \gets \underset{j\in N^-(k_m)}{\sum} f^\prime_{j}$\;
             \vspace{0.2cm}
             \nl \KwRet $\{f_1,\ldots,f_m\}$\;}
\end{algorithm2e}

To construct a gradient-based optimization scheme, we combine the gradient ascent direction computed by \Cref{alg:root-tree} with a backtracking Armijo line search to choose the step size in a steepest ascent optimization algorithm.  Unlike a standard steepest ascent algorithm however, we have the flexibility at each iteration to change which root node is used to compute the dual gradient and enforce the dual problem constraints.  We can either use a fixed root note or cycle through through all of the possible root nodes. In the two marginal case, \cite{Jacobs2020BF} showed that cycling can help accelerate convergence by keeping the Hessian of the dual problem well-conditioned.  Our empirical results in \Cref{sec:app} indicate that cycling the root node is also critical for fast convergence in the MMOT setting for some test cases. 
\section{Numerical Results}
\label{sec:app}
We now study the performance of our MMOT solver through several numerical examples.   A public GitHub repository with a python implementation of the approach described in \Cref{sec:alg} and all results discussed below can be found in \cite{mmot_github}.   Note that our implementation leverages the \texttt{C} code released in \cite{bfm_github} for fast evaluation of the $c$-transform.

\subsection{Validation}

In this subsection, $c=\sum_{i=1}^{m-1}\frac{1}{2}\abs{x_i-x_{i+1}}^2$. We start with an 4-marginal example as shown in \cref{fig:cycletest1} when marginals $(\mu_i)$ only differ by a translation. As marginals are normalized to be probability measures, the ground truth of optimal transport cost for each test is 0.12. We list (rounded) averaged test results from picking different root nodes, comparing the result from pick $\mu_1$ as the root in the parentheses.

\begin{table}[htb!]
    \centering
    \begin{tabular}{c|cc|cc}
                  & \multicolumn{2}{c|}{Error $10^{-2}$} & \multicolumn{2}{c}{Error $10^{-4}$}\\
    \hline
    Grid Size     & Iterations & Time (s) & Iterations & Time (s) \\
    \hline
    $256\times 256$ & 9 (7) & 0.41 (0.33) & 70 (60) & 2.36 (1.97)\\
    $512\times 512$ & 9 (7) & 1.74 (1.33) & 114 (70) & 19.91 (13.17)\\
    $1024\times 1024$ & 9 (7) & 8.16 (6.86) & 157 (72) & 118.48 (56.21)
    \end{tabular}
    \caption{Compute MMOT cost to \cref{fig:cycletest1}.}
    \label{tab:valid}
\end{table}

Second, we test another 4-marginal example as shown in \cref{fig:cycletest2}. This time we regard the results via the back-and-forth method as the ground truth. Applying the gluing lemma \Cref{lem:gluing}, the optimal objective value is $\frac{1}{2}\sum_{i=1}^{m-1}W_2^2(\mu_i,\mu_{i+1})$. Note that when $m\geqslant 2$, our algorithm saves storage of dual variables, the number of Laplace transform and $c$-transform per iteration, comparing with applying BFM on each $W_2^2(\mu_i,\mu_{i+1})$. However, both of our method and BFM do not have convergence guarantee, though in practice, most test examples stop in few iterations with high accuracy.

\begin{table}[htb!]
    \centering
    \begin{tabular}{c|cc|cc}
                  & \multicolumn{2}{c|}{Error $10^{-3}$} & \multicolumn{2}{c}{Error $10^{-5}$}\\
    \hline
    Grid Size     & Iterations & Time (s) & Iterations & Time (s) \\
    \hline
    $256\times 256$ & 6 (5) & 0.29 (0.23) & 22 (17) & 0.97 (0.81)\\
    $512\times 512$ & 6 (5) & 1.30 (1.01) & 19 (17) & 3.79 (3.49)\\
    $1024\times 1024$ &  6 (5) & 6.43 (4.70) &  20 (19) &  19.46 (18.13)
    \end{tabular}
    \caption{Compute MMOT cost to \cref{fig:cycletest2}}
    \label{tab:valid2}
\end{table}

\subsection{Root Node Cycling}\label{subsec:cycle}
As mentioned in \Cref{sec:alg}, the choice of root node can vary between optimization iterations.  Here we compare the performance of our approach in two scenarios: (1) the root node is fixed throughout the optimization iterations, and (2) the root node is deterministically cycled by choosing it to be $k~(\textrm{mod~} m)$ at the $k^{th}$ iteration.  In all tests the cost function is given by $c(x_0,x_1,x_2,x_3)=\frac{1}{2}(\abs{x_0-x_1}^2+\abs{x_1-x_2}^2+\abs{x_2-x_3}^2)$. 

The results shown in \Cref{fig:cycle-performance} demonstrate that root node cycling can accelerate the convergence dramatically, especially when the marginal distributions have different supports. Loosely speaking, cycling root nodes helps encourage the dual solution to be $c$-conjugate.

\begin{figure}[htb!]
    
    \begin{subfigure}[b]{\textwidth}
    \begin{center}
    \input{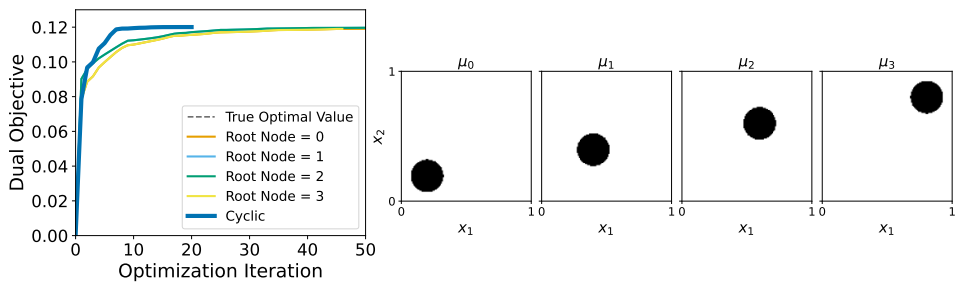}
    \end{center}
    \caption{Impact of cycling the root node with pure translation.\label{fig:cycletest1}}
    \end{subfigure}
    \begin{subfigure}[b]{\textwidth}
    \begin{center}
    \input{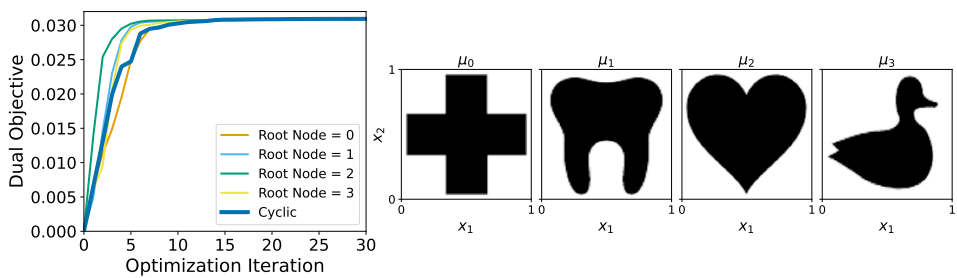}
    \end{center}
    \caption{Impact of cycling the root node with shape deformation.\label{fig:cycletest2}}
    \end{subfigure}
    \begin{subfigure}[b]{\textwidth}
    \begin{center}
    \input{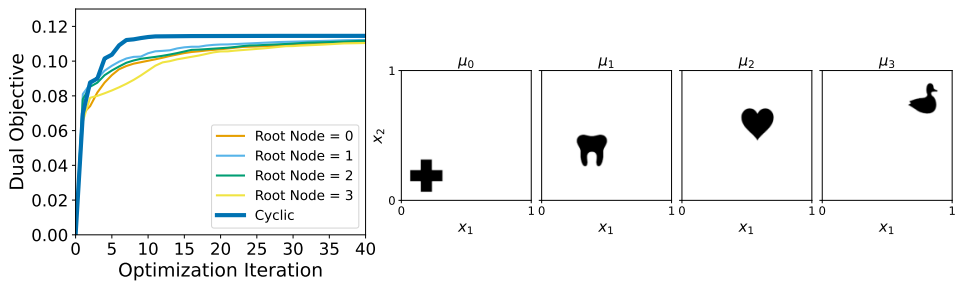}
    \end{center}
    \caption{Impact of cycling the root node with translation and shape deformation.\label{fig:cycletest3}}
    \end{subfigure}
    
    \caption{The impact of cycling through the root node during the gradient step for three different test cases.  In  each four-marginal example, the cost function is given by $c(x_0,x_1,x_2,x_3) = \frac{1}{2}(\abs{x_0-x_1}^2 + \abs{x_1-x_2}^2 + \abs{x_2-x_3}^2)$, which can directly be mapped to a rooted tree without marginal duplication.  The impact of using different directed trees during the gradient step is dramatic in the translation cases where the support of each marginal distribution is distinct.  With root node cycling, the algorithm converges in approximately 10-15 iterations, while the fixed-node gradient approach may not converged to the true value after 250 iterations.}
    \label{fig:cycle-performance}
\end{figure}

\subsection{Wasserstein barycenter}\label{subsec:bary}
Agueh and Carlier \cite{Agueh2011Barycenters} introduced the Wasserstein barycenter problem:
\begin{equation}
    \label{eq:bary}
    \inf_{\mu \in \P(X)} \sum_{i=1}^m \frac{\lambda_i}{2} W_2^2(\mu_i, \mu)
\end{equation}
for a given sequence of probability measures $(\mu_i)\subseteq \P(X)$ and positive weights $(\lambda_i)$. The minimizer $\mu$ is called as the \textit{Wasserstein barycenter}. Without loss of generality,  we assume $\sum_{i=1}^m \lambda_i=1$. Agueh and Carlier showed that \eqref{eq:bary} is equivalent to a MMOT problem under the Gangbo-\'{S}wi\c{e}ch type cost $\displaystyle c(x_1,\cdots,x_m)=\sum_{1\leqslant i<j\leqslant m}\frac{\lambda_i \lambda_j}{2}\abs{x_i-x_j}^2$. Importantly, this cost function includes only pairwise terms and the gradient ascent algorithm described above can also be used.  Once solved, the barycenter $\mu$ can be extracted from any MMOT dual variable $f_i$ with its marginal $\mu_i$:
\begin{equation}\label{eq:root_map}
    \mu = \push{\id-\frac{1}{\lambda_i}\nabla f_i}{\mu_i}.
\end{equation} 
Please refer to the supplementary documents and references there. The pipeline to solve the barycenter problem via our algorithm is illustrated graphically in \Cref{fig:bary}.

\begin{figure}[htb]
\centering
\begin{subfigure}{.3\linewidth}
    \centering
    \includegraphics[width=0.8\linewidth]{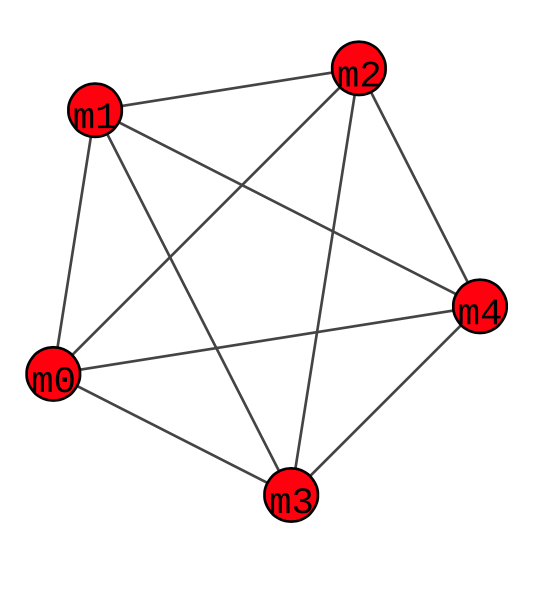}
    \caption{The undirected graph with cycle.\label{fig:bary_cycle}}
\end{subfigure}
    \hfill
\begin{subfigure}{.3\linewidth}
    \centering
    \includegraphics[width=\linewidth]{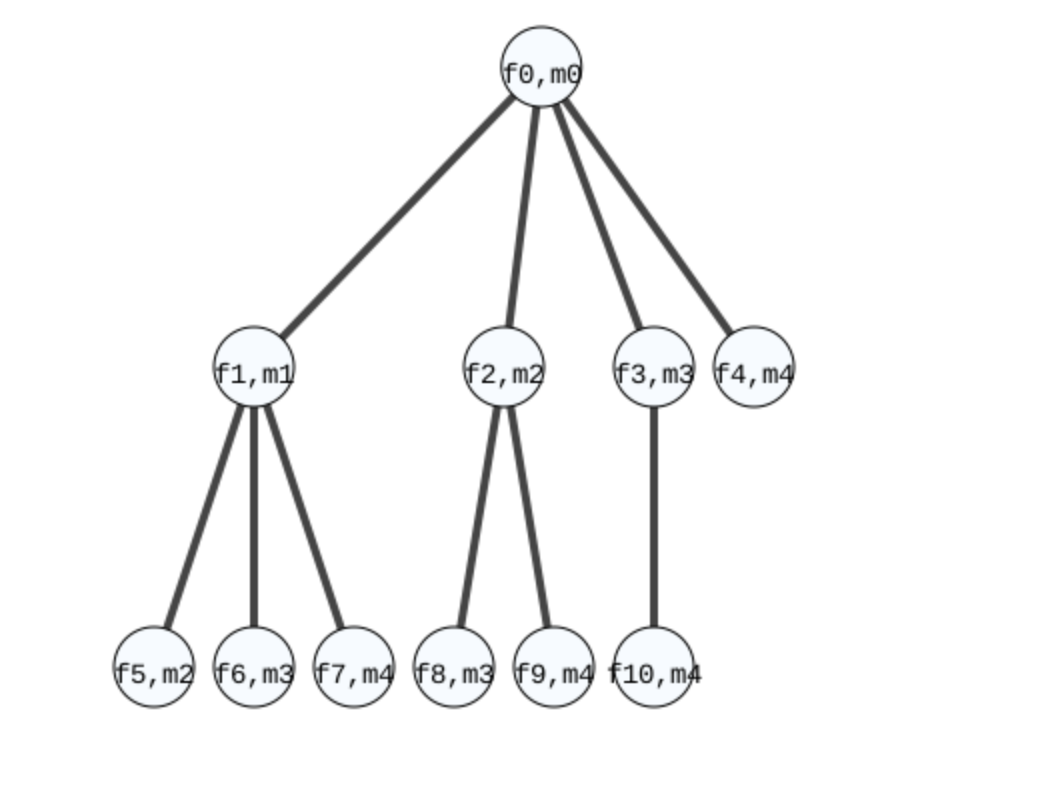}
    \caption{The undirected tree after duplicating nodes.\label{fig:bary_tree}}
\end{subfigure}
   \hfill
\begin{subfigure}{.3\linewidth}
    \centering
    \includegraphics[width=\linewidth]{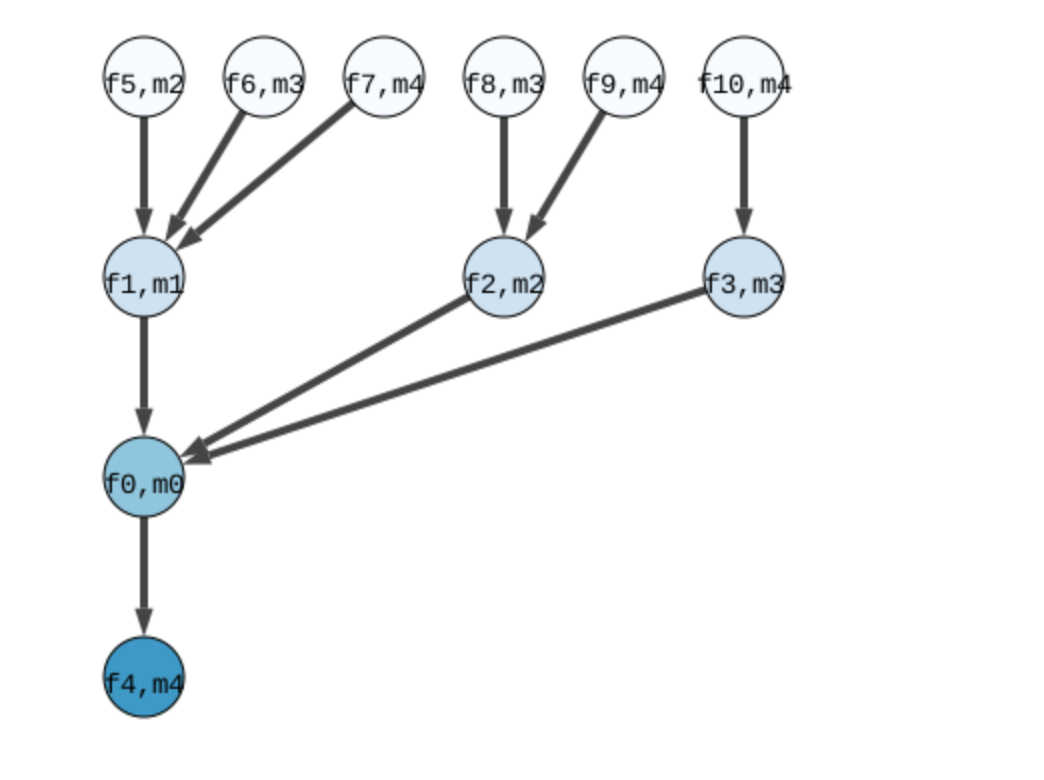}
    \caption{A rooted tree representation of \cref{fig:bary_cycle}.\label{fig:bary_rooted}}
\end{subfigure}
\caption{{The pipeline to compute the Wasserstein barycenter via the MMOT approach. (a) is the corresponding undirected graph representation of the Gangbo-\'{S}wi\c{e}ch type cost. (b) is the undirected graph representation after ``unrolling'' (a) by duplicating the nodes. (c) is the rooted tree representation after picking a root node and is updated by \cref{alg:root-tree}.} \label{fig:bary}}
\end{figure}

First, the Wasserstein barycenter problem is represented as a MMOT with a complete undirected graph representation (see \Cref{fig:bary_cycle}). Second, to solve MMOT under Gangbo-\'{S}wi\c{e}ch type cost, we first unroll this undirected graph by duplicating nodes to remove cycles (see \Cref{fig:bary_tree}).  We then use the method described in \Cref{sec:alg} to solve the unrolled problem, and obtain a dual solution that can be used to compute the barycenter.

\Cref{fig:shape_interpolation} demonstrates the use of this MMOT solution for shape interpolation.  Inspired by an example in the POT (Python Optimal Transport) package \cite{Flamary2021pot}, we use the four marginals ``redcross'', ``heart'', ``tooth'' and ``duck'' shown at the four corners of \Cref{fig:shape_interpolation}.  Each image is $1088\times 1088$ pixels.   

All other plots in  \Cref{fig:shape_interpolation} are a weighted Wasserstein barycenters computing using our MMOT approach.  The weights correspond to bilinear interpolation between the corners.  With comparable computational times to regularized solvers, our method provides much sharper interpolations.

\begin{figure}[htb!]
    \centering
    \includegraphics[width=0.6\textwidth]{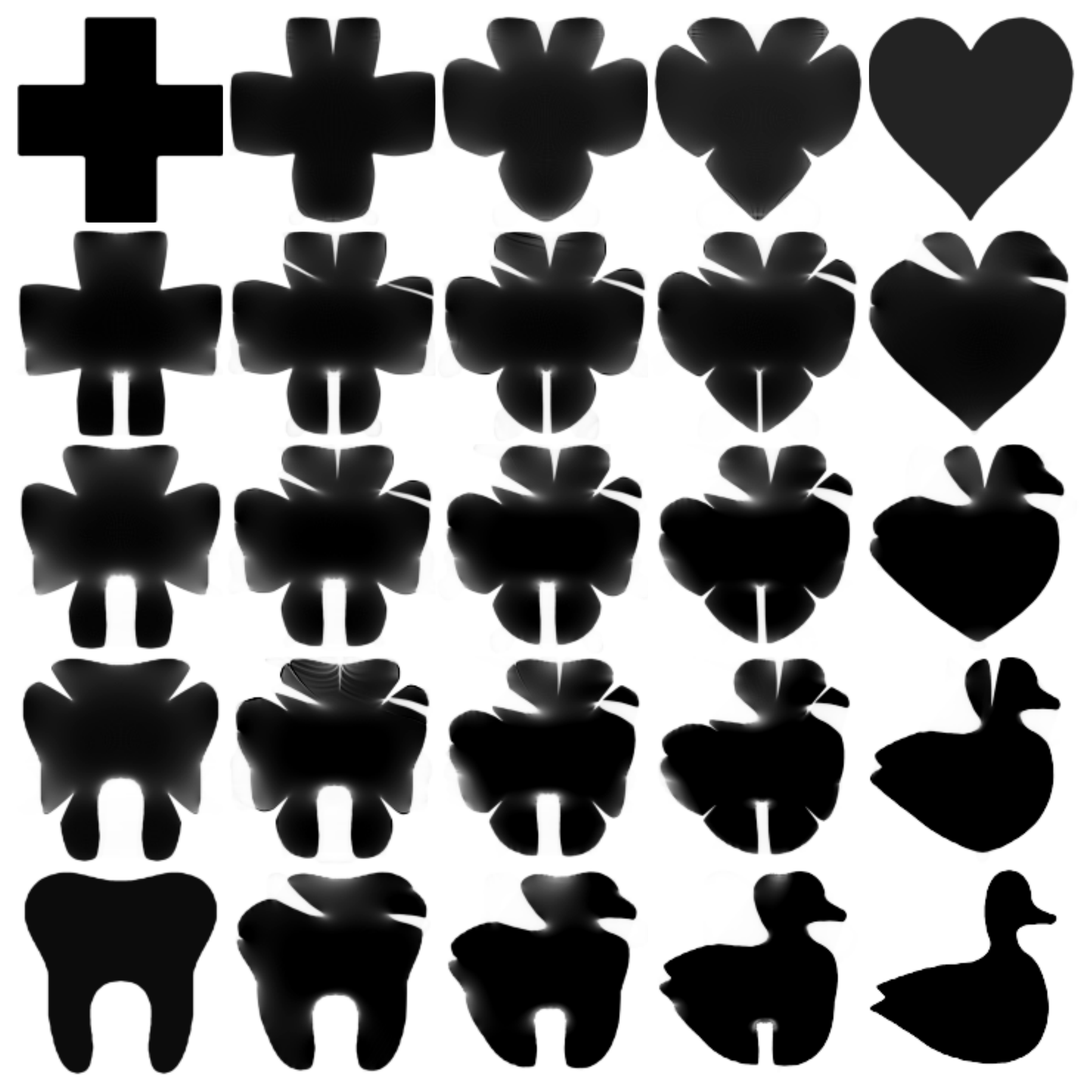}
    \caption{The Wasserstein barycenters of ``redcross'', ``heart'', ``tooth'' and ``duck''. All interpolated images are obtained by solving \eqref{eq:bary} and computing \eqref{eq:root_map}. Those interpolated images show features from four marginals and have negligible diffusion effects.}
    \label{fig:shape_interpolation}
\end{figure}

\section{Summary}\label{sec:summary}

We have presented a novel algorithm for multimarginal optimal transport problems with pairwise cost functions. Our solutions do not require regularizing the MMOT problem and are exact to within solver tolerance.  To the best of our knowledge, this is the first extension of the back-and-forth method (BFM) introduced by \cite{Jacobs2020BF} to the multi-marginal setting and the first approach capable of solving MMOT problems based on high resolution imagery.   We leverage a graphical interpretation of the dual MMOT problem that can be applied to MMOT problems with an arbitrary number of marginals, as long as the cost function admits a pairwise representation.  

As our method is inspired by BFM, our approach has the same gap between theoretical convergence analysis and numerical observations. Finding a convergence result under mild assumptions is therefore an interesting avenue for future work. It is worthy to note the hardness results in \cite{Altschuler2021Hardness}. In the meanwhile, it is also natural to ask if these approaches can be generalized to cost functions which are not the sum of pairwise functions, for example the determinant type of cost function \cite{Carlier2008determinant}. Note that one motivation for pairwise costs is the need for a fast $c$-transform. For pairwise cost function, the $c$-transform in high dimensions can be decomposed into nested 1D $c$-transforms, which can be obtained through fast algorithms of a divide-and-conquer type. As the $c$-transform is crucial to understand classical optimal transport theory, it maybe not be a coincidence that fast $c$-transforms are the key to numerical solutions.

\newpage
\appendix



\section{Supplement to \Cref{subsec:c-trans}}

In this section, we provide with some comparisons between the $c$-transform and the well-known Legendre transform. The Legendre transform not only helps us to understand the $c$-transform, but also helps with our methods in at least two aspects: first, the closed form of optimal transport map for strictly convex function is in terms of the Legendre transform (see \cref{thm:trans}); second, the $c$-transform is done via fast Legendre transform (see \cref{lem:trans}), thanks to the code released in \cite{bfm-github}.

\begin{definition}[Subdifferential] 
The subdifferential $\partial \phi(x)$ is defined as:
\begin{equation*}
    \partial \phi(x)\defeq \left\{y\,\mid x' \cdot y - \phi(x')\mathrm{~is~maximal~ at~} x'=x\right\}.
\end{equation*}
\end{definition}

\begin{definition}[$c$-superdifferential]
The $c$-superdifferential is defined as:
\begin{equation*}
    \partial^c f(x_1)\defeq \left\{x_2\,\mid c(x',x_2)-f(x') \mathrm{~is~minimal~at~} x'=x_1\right\}
\end{equation*}
\end{definition}

\cref{lem:trans} and \cref{thm:trans} compare the Legendre transform with $c$-transform. 
\begin{lemma}[\cite{Santambrogio2015OTAM, Ambrosio2021Lectures}]\label{lem:trans}
For $X_1=X_2=\R^d$,
\begin{enumerate}[label=(\roman*)]
    \item $\phi^{**}\leqslant \phi$, with equality if and only if $\phi$ is convex and lower semi-continuous;
    \item $f^{cc}\geqslant f$, with equality if and only if $f$ is $c$-concave;
    \item For $c(x_1,x_2)=\frac{1}{2}\abs{x_1-x_2}^2$, $f(x_2)$ is $c$-concave if and only if $\phi(x_2)=\frac{1}{2}\abs{x_2}^2-f(x_2)$ is convex and lower semi-continuous. Moreover, 
    $f^c(x_1)=\frac{1}{2}\abs{x_1}^2-\phi^*(x_1).$
    \item For convex function $\phi$ and $\phi^*$, we have
    \begin{equation*}
        y\in\partial\phi(x) \Longleftrightarrow \phi(x)+\phi^*(y)=x\cdot y \Longleftrightarrow x\in\partial \phi^*(y) . 
    \end{equation*}
    \item $x_2\in\partial^c f(x_1)\Longleftrightarrow f(x_1)+f^c(x_2)=c(x_1,x_2) \Longleftrightarrow x_1\in \partial f^c(x_2)$.
\end{enumerate}
\end{lemma}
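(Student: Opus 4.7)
The plan is to treat the five parts as two parallel families: items (i) and (iv) are the classical Legendre--Fenchel facts, items (ii) and (v) are their $c$-transform analogues obtained by essentially the same arguments with $x\cdot y$ replaced by $-c(x_1,x_2)$, and item (iii) is a direct algebraic bridge between the two families at the quadratic cost. I would therefore prove (i) and (iv) first, then mirror the arguments for (ii) and (v), and close with the explicit computation for (iii).

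For (i), the inequality $\phi^{**}\leqslant\phi$ is immediate from Fenchel--Young: by definition $\phi^*(y)\geqslant x\cdot y-\phi(x)$ for all $x,y$, so $x\cdot y-\phi^*(y)\leqslant\phi(x)$ and taking the supremum in $y$ yields the bound. The equality case is precisely the Fenchel--Moreau theorem, which identifies convex lower semi-continuous functions with suprema of their affine minorants; the nontrivial direction is obtained by separating the closed convex epigraph of $\phi$ from each point strictly below it via Hahn--Banach. This is the one place where I expect real work, and I would quote Fenchel--Moreau rather than reprove it. Item (iv) then drops out of (i) with almost no effort: the inequality $\phi(x)+\phi^*(y)\geqslant x\cdot y$ is definitional, equality at $(x,y)$ is the unpacking of ``$x$ attains the supremum in $\phi^*(y)$,'' which is the very definition of $y\in\partial\phi(x)$, and the symmetry $y\in\partial\phi(x)\Leftrightarrow x\in\partial\phi^*(y)$ uses $\phi^{**}=\phi$ (convex, lsc) to swap the roles of $\phi$ and $\phi^*$.

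Next, I would prove (ii) and (v) by the same template. For (ii), the bound $f^{cc}\geqslant f$ follows since $f^c(x_2)\leqslant c(x_1,x_2)-f(x_1)$ gives $f(x_1)\leqslant c(x_1,x_2)-f^c(x_2)$, and the infimum in $x_2$ is exactly $f^{cc}(x_1)$. For the equality characterization: if $f=g^c$ then applying (ii) to $g$ yields $g\leqslant g^{cc}=f^c$, hence $f^{cc}(x_1)=\inf_{x_2}(c(x_1,x_2)-f^c(x_2))\leqslant \inf_{x_2}(c(x_1,x_2)-g(x_2))=g^c(x_1)=f(x_1)$, and combined with $f^{cc}\geqslant f$ one gets equality; conversely $f=f^{cc}=(f^c)^c$ exhibits $f$ as the $c$-transform of $f^c$. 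Item (v) is then the $c$-analogue of (iv): the inequality $f(x_1)+f^c(x_2)\leqslant c(x_1,x_2)$ is definitional, equality is the definition of $x_2\in\partial^c f(x_1)$, and the symmetry uses $f^{cc}=f$ (i.e.\ $c$-concavity of $f$) to exchange the roles of $f$ and $f^c$.

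Finally, for (iii), I would expand $c(x_1,x_2)=\tfrac12|x_1|^2-x_1\cdot x_2+\tfrac12|x_2|^2$ inside the infimum defining $f^c$. Pulling $\tfrac12|x_1|^2$ out of the infimum and rewriting the remaining $\inf_{x_2}$ as a $\sup_{x_2}$ of $x_1\cdot x_2-(\tfrac12|x_2|^2-f(x_2))$ produces exactly $f^c(x_1)=\tfrac12|x_1|^2-\phi^*(x_1)$ with $\phi(x_2)=\tfrac12|x_2|^2-f(x_2)$. The biconditional ``$f$ is $c$-concave iff $\phi$ is convex and lsc'' then follows because, by the same identity, $f$ is a $c$-transform of some function iff $\phi$ is a Legendre transform of some function, and the class of Legendre transforms is exactly the convex lsc functions by (i).

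The substantive obstacle is really only (i), and specifically the Fenchel--Moreau equality case which rests on convex separation; once that is in hand the remaining items are essentially formal manipulations of the defining inequalities, performed twice, once on each side of the Legendre/$c$-transform analogy.
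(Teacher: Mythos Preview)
The paper does not actually prove this lemma; it is stated with citations to \cite{Santambrogio2015OTAM, Ambrosio2021Lectures} and no argument is given. Your proposal is correct and is precisely the standard textbook route found in those references: Fenchel--Young for the inequalities, Fenchel--Moreau for the equality case in (i), the parallel argument with $x\cdot y$ replaced by $-c(x_1,x_2)$ for (ii) and (v), and the quadratic expansion for (iii). One small remark: in (v) you correctly note that the backward implication $x_1\in\partial^c f^c(x_2)\Rightarrow f(x_1)+f^c(x_2)=c(x_1,x_2)$ needs $f^{cc}=f$, i.e.\ $c$-concavity of $f$; the lemma as stated in the paper suppresses this hypothesis (and writes $\partial f^c$ where $\partial^c f^c$ is meant), but your handling of it is the right one.
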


\begin{theorem}[\cite{Rockafellar1970Convex, Gangbo2004introduction}]\label{thm:trans}
\begin{enumerate}[label=(\roman*)]
\item Given a strictly convex and lower semi-continuous function $\phi(x):\R^d \mapsto \R$, then 
\begin{equation*}
    y = \nabla \phi^*(x)
\end{equation*}
is the unique maximizer to 
\begin{equation*}
    \sup_y \langle x,y\rangle - \phi(y).
\end{equation*}
\item 
Given $c(x_1,x_2)=h(x_1-x_2)$ for some strictly convex function $h$, assume $g(x_2)$ is a compactly supported continuous function, and $f(x_1)=g^c(x_1)$. If $f$ is differentiable at $x_1$, then 
\begin{equation*}
    x_2\defeq x_1- (\nabla h)^{-1}(\nabla f(x_1))= x_1 - \nabla h^*(\nabla f(x_1))
\end{equation*}
is the unique minimizer to 
\[\inf_{x_2} c(x_1,x_2)-g(x_2).\]
That is, $x_2$ is the unique pre-image of $x_1$ under the mapping $\partial^c g$. 
\end{enumerate}
\end{theorem}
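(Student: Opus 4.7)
The plan is to dispatch both parts using the duality dictionary in \Cref{lem:trans}. For part (i), any maximizer $y$ of $\langle x, y\rangle - \phi(y)$ achieves the supremum defining $\phi^*(x)$, so $\phi(y) + \phi^*(x) = \langle x, y\rangle$, which by item (iv) of \Cref{lem:trans} is equivalent to $y \in \partial \phi^*(x)$ (equivalently $x \in \partial \phi(y)$). Strict convexity of $\phi$ forces $\partial \phi$ to be injective: if $x \in \partial \phi(y_1) \cap \partial \phi(y_2)$ with $y_1 \neq y_2$, the strict subgradient inequality along the segment $[y_1, y_2]$ produces a contradiction. Combined with the (implicit) differentiability of $\phi^*$ at $x$, which is the standard consequence of strict convexity of $\phi$ on $\R^d$ via conjugate essential smoothness, this collapses the set of maximizers to the single point $\nabla \phi^*(x)$.

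For part (ii), the key step is a role-reversal argument that leverages the stated differentiability of $f$. Existence of a minimizer of $y \mapsto h(x_1 - y) - g(y)$ follows from continuity of $h, g$ and compactness of $\operatorname{supp} g$; fix such a minimizer $x_2$. By definition of the $c$-transform, the inequality $f(x') = g^c(x') \leqslant h(x' - x_2) - g(x_2)$ holds for every $x' \in \R^d$, with equality at $x' = x_1$ since $x_2$ is a minimizer. Consequently the nonnegative function $\Phi(x') \defeq h(x' - x_2) - g(x_2) - f(x')$ attains its global minimum zero at $x_1$. Since $\Phi$ is differentiable there (by the hypothesis on $f$ and smoothness of $h$), the first-order condition $\nabla \Phi(x_1) = 0$ reads $\nabla h(x_1 - x_2) = \nabla f(x_1)$. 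Strict convexity of $h$ makes $\nabla h$ injective with inverse $\nabla h^*$ via Legendre duality, yielding the stated formula $x_2 = x_1 - \nabla h^*(\nabla f(x_1))$. Applying the same first-order argument to any competing minimizer recovers the same point, which gives uniqueness.

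The main obstacle is justifying $(\nabla h)^{-1} = \nabla h^*$ under the bare hypothesis that $h$ is strictly convex. Injectivity of $\nabla h$ is immediate, but identifying the inverse with $\nabla h^*$ requires $h^*$ to be differentiable on the range of $\nabla h$, which in Rockafellar's framework follows from $h$ being essentially smooth and essentially strictly convex. For the quadratic and Gangbo--\'Swi\c{e}ch costs actually used in the paper this is automatic, and in general I would appeal to the Legendre--Fenchel inversion theorem in \cite{Rockafellar1970Convex}. A minor additional point is verifying that $\nabla h$ exists at $x_1 - x_2$; for finite-valued convex $h$ on $\R^d$ this holds on a dense set, and the first-order equation $\nabla h(x_1 - x_2) = \nabla f(x_1)$ extends the value to the point in question via the continuity of $\nabla h^*$ on the interior of its domain.
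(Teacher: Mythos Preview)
The paper does not supply its own proof of \Cref{thm:trans}; it merely states the result and attributes it to \cite{Rockafellar1970Convex, Gangbo2004introduction}. Your proposal therefore cannot be compared line-by-line against a proof in the paper, but it is exactly the standard argument one finds in those references, and it is correct.

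A couple of remarks on the technical caveats you flagged. In part (ii) you worry about differentiability of $h$ at $x_1-x_2$. The cleanest way to avoid this is the one you essentially sketch: from the global minimum of $\Phi$ at $x_1$ together with differentiability of $f$ there, one obtains $\nabla f(x_1)\in\partial h(x_1-x_2)$ in the convex-subdifferential sense, with no smoothness of $h$ required. Strict convexity of $h$ then makes the multivalued inverse $(\partial h)^{-1}$ single-valued, and Fenchel duality identifies it with $\partial h^*$; the identification $(\nabla h)^{-1}=\nabla h^*$ at the relevant point is precisely Rockafellar's Legendre inversion for essentially strictly convex functions. This also handles your concern in part (i): strict convexity of $\phi$ on $\R^d$ gives essential strict convexity, hence essential smoothness of $\phi^*$, so $\partial\phi^*(x)=\{\nabla\phi^*(x)\}$ whenever $x$ lies in the interior of $\operatorname{dom}\phi^*$, which is the implicit standing assumption in the statement.
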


\section{Supplementary Lemmas to \Cref{subsec:illustrative}}
In this section, we provide with two supplementary lemmas. \cref{lem:first_var} follows \cite{Gangbo2004introduction} to compute the Fr\'{e}chet derivatives first, in order to define the gradient in $\dot{H}^1$. As the cost function gets more complex, finding the Fr\'{e}chet derivatives through this way can be quite complex. \cref{lem:f_vs_u} serves as one motivation to our algorithm. It shows the difference roles the Kantorovich potentials play, from 2-marginal to multi-marginal. As a result, we are motivated by this to introduce the net potentials $f'_i$ along the rooted tree.
\begin{lemma}\label{lem:first_var}
Let $X_1,X_2,X_3\subset \R^d$ be compact and convex domains and each measure $\mu_i \in \P(X_i)$ has a strictly positive density, and $c(x_1,x_2,x_3)=c_{12}(x_1,x_2)+c_{23}(x_2,x_3)$ where $c_{12}(x_1,x_2)=h_1(x_1-x_2), c_{23}(x_2,x_3)=h_2(x_2-x_3)$ for some  continuously differentiable and strictly convex functions $h_1,h_2$. Define a functional
\[H((f_2+f_3)^c, f_2, f_3)=\int_{X_1} (f_2+f_3)^c(x_1)\mathrm{d}\mu_1 + \int_{X_2}f_2(x_2)\mathrm{d}\mu_2 + \int_{X_3}f_3(x_3)\mathrm{d}\mu_3,\]
over the space of continuous function $f_2:X_2\mapsto \R, f_3:X_3\mapsto \R$. Then 
\[\delta_{f_2}H(f_2;f_3)=\mu_2 - \push{S_{(f_2-f_3^{c_{23}})^{c_{12}}}}{\mu_1}.\]
\end{lemma}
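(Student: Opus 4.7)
The plan is to exploit the pairwise structure of the cost to collapse the three-variable $c$-transform into a single-variable $c_{12}$-transform, and then to apply an envelope-theorem argument so that only the outer infimum contributes to the first variation.

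First I would rewrite the $c$-transform in closed form. Since $c(x_1,x_2,x_3)=c_{12}(x_1,x_2)+c_{23}(x_2,x_3)$, separating the infimum over $x_3$ from the infimum over $x_2$ gives
\begin{equation*}
(f_2+f_3)^c(x_1)
=\inf_{x_2}\Bigl[c_{12}(x_1,x_2)-f_2(x_2)+\inf_{x_3}\bigl(c_{23}(x_2,x_3)-f_3(x_3)\bigr)\Bigr]
=\bigl(f_2-f_3^{c_{23}}\bigr)^{c_{12}}(x_1).
\end{equation*}
Substituting this identity into $H$, the functional becomes
\begin{equation*}
H(f_2,f_3)=\int_{X_1}\bigl(f_2-f_3^{c_{23}}\bigr)^{c_{12}}(x_1)\,\mathrm{d}\mu_1
+\int_{X_2} f_2\,\mathrm{d}\mu_2+\int_{X_3} f_3\,\mathrm{d}\mu_3,
\end{equation*}
so that varying $f_2$ only affects the first and second terms.

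Next, I would compute the variation of the first integral in the direction of a test function $\phi\in C(X_2)$. Writing $g_\epsilon\defeq f_2+\epsilon\phi-f_3^{c_{23}}$ and letting $x_2^\ast(x_1)$ denote the unique minimizer of $x_2\mapsto c_{12}(x_1,x_2)-g_0(x_2)$ for $\mu_1$-a.e.\ $x_1$, the strict convexity of $h_1$, the compactness of $X_2$, and the continuity assumptions guarantee existence and uniqueness of this minimizer. An envelope-theorem/Danskin-type argument then yields
\begin{equation*}
\frac{d}{d\epsilon}\bigg|_{\epsilon=0}\bigl(g_\epsilon\bigr)^{c_{12}}(x_1)=-\phi\bigl(x_2^\ast(x_1)\bigr),
\end{equation*}
and by Theorem~\ref{thm:trans}(ii) the minimizer is precisely $x_2^\ast(x_1)=S_{(f_2-f_3^{c_{23}})^{c_{12}}}(x_1)$.

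Finally, a change of variables turns this into a pushforward. Combining the two variations,
\begin{equation*}
\delta_{f_2}H(f_2;f_3)(\phi)
=-\int_{X_1}\phi\bigl(S_{(f_2-f_3^{c_{23}})^{c_{12}}}(x_1)\bigr)\mathrm{d}\mu_1(x_1)
+\int_{X_2}\phi(x_2)\,\mathrm{d}\mu_2(x_2)
=\int_{X_2}\phi\,\mathrm{d}\bigl[\mu_2-\push{S_{(f_2-f_3^{c_{23}})^{c_{12}}}}{\mu_1}\bigr],
\end{equation*}
which identifies the Fr\'echet derivative with the measure $\mu_2-\push{S_{(f_2-f_3^{c_{23}})^{c_{12}}}}{\mu_1}$, as claimed. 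The main technical obstacle will be justifying the envelope differentiation rigorously: namely, showing that the minimizer map $x_1\mapsto x_2^\ast(x_1)$ is well defined $\mu_1$-a.e., that the difference quotients are uniformly dominated so that differentiation under the integral sign is legitimate, and that the absolute continuity of $\mu_1$ together with the strict convexity of $h_1$ ensures that the nondifferentiability set of $(f_2-f_3^{c_{23}})^{c_{12}}$ is $\mu_1$-negligible; these points follow the arguments of Gangbo--McCann adapted to this pairwise setting.
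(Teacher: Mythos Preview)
Your proposal is correct and follows essentially the same route as the paper: both collapse $(f_2+f_3)^c$ to $(f_2-f_3^{c_{23}})^{c_{12}}$ via the pairwise cost structure, then invoke the envelope/Danskin argument to differentiate the $c_{12}$-transform and identify the minimizer with $S_{(f_2-f_3^{c_{23}})^{c_{12}}}$. The paper's version is terser, deferring the technical justifications you spell out to Lemma~3 of \cite{Jacobs2020BF} and Proposition~2.9 of \cite{Gangbo2004introduction}.
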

\begin{proof}
The proof follows the proof to lemma 3 in \cite{Jacobs2020BF}. Please refer to Proposition 2.9 in \cite{Gangbo2004introduction} for a detailed proof or refer to \cite{Gangbo1996Geometry} for milder assumptions.

\begin{align*}
    &\lim_{\varepsilon\to 0}\dfrac{H((f_2+\varepsilon\xi+f_3)^c, f_2+\varepsilon\xi, f_3)-H((f_2+f_3)^c,f_2,f_3)}{\varepsilon}\\
    =&\int_{X_1}\frac{(f_2(x_2)+\varepsilon\xi(x_2)+f_3(x_3))^c - (f_2(x_2)+f_3(x_3))^c}{\varepsilon}\mathrm{d}\mu_1 + \int_{X_2} \xi(x_2)\mathrm{d}\mu_2\\
    =&\int_{X_1} \frac{(f_2-f_3^{c_{23}}+\varepsilon\xi)^{c_{12}}-(f_2-f_3^{c_{23}})^{c_{12}}}{\varepsilon}\mathrm{d}\mu_1 + \int_{X_2}\xi(x_2)\mathrm{d}\mu_2\\
    =&-\int_{X_1}\xi(S_{(f_2-f_3^{c_{23}})^{c_{12}}})(x_1)\mathrm{d}\mu_1 + \int_{X_2}\xi (x_2)\mathrm{d}\mu_2\\
    =&-\int \xi \mathrm{d} [\push{S_{(f_2-f_3^{c_{23}})^{c_{12}}}}{\mu_1} + \int \xi\mathrm{d}\mu_2.
\end{align*}
\end{proof}

\begin{lemma}\label{lem:f_vs_u}
Let $X_1,X_2,X_3\subset \R^d$ be compact and convex domains, and each measure $\mu_i\in \P(X_i)$ is absolutely continuous with respect to the Lebesgue measure. For $c(x_1,x_2,x_3)=c_{12}(x_1,x_2)+c_{23}(x_2,x_3)$, we have:
\begin{itemize}
\item If $(u_1,v_1), (u_2,v_2)$ are optimal loading/unloading prices to the OT under cost $c_{12}, c_{23}$ respectively, then $(f_1, f_2, f_3)=(u_1, v_1+u_2, v_2)$ is the Kantorovich potential to the MMOT under the cost $c(x_1,x_2,x_3)$.
    \item If $(f_1,f_2,f_3)$ is the Kantorovich potential to the MMOT under the cost $c$, then $(u_1,v_1)=(f_1, f_1^{c_{12}}), (u_2,v_2)=(f_2-f_1^{c_{12}},f_3)$ are optimal loading/unloading prices to the OT under cost $c_{12}, c_{23}$ respectively.
\end{itemize}
\end{lemma}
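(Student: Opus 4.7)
The plan rests on two basic facts already established in the excerpt: the decomposition
\[
\inf_{P\in\Gamma(\mu_1,\mu_2,\mu_3)}\int c\,\mathrm{d}P
=\inf_{P_{12}\in\Gamma(\mu_1,\mu_2)}\int c_{12}\,\mathrm{d}P_{12}
+\inf_{P_{23}\in\Gamma(\mu_2,\mu_3)}\int c_{23}\,\mathrm{d}P_{23},
\]
obtained from the gluing lemma, together with strong duality for the MMOT problem and for each two-marginal OT problem (all of which applies thanks to absolute continuity of the $\mu_i$ with respect to Lebesgue measure on compact convex domains). Both statements then reduce to verifying (a) admissibility of the proposed tuples and (b) that the dual objective values match after summation.

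For part (i), I would first check admissibility by adding the pointwise inequalities $u_1(x_1)+v_1(x_2)\leqslant c_{12}(x_1,x_2)$ and $u_2(x_2)+v_2(x_3)\leqslant c_{23}(x_2,x_3)$, which yields $f_1(x_1)+f_2(x_2)+f_3(x_3)=u_1(x_1)+v_1(x_2)+u_2(x_2)+v_2(x_3)\leqslant c(x_1,x_2,x_3)$. Then I would compute the MMOT dual objective at $(f_1,f_2,f_3)=(u_1,v_1+u_2,v_2)$, split the integral over $\mu_2$ into the two contributions, and observe that the sum equals the sum of the two optimal OT dual values. By strong duality for each two-marginal OT and by the decomposition identity, this sum equals the MMOT primal optimum; hence $(f_1,f_2,f_3)$ is an optimal dual tuple.

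For part (ii), the feasibility of $(u_1,v_1)=(f_1,f_1^{c_{12}})$ for the $c_{12}$-problem is immediate from the definition of the $c$-transform. The subtle check is feasibility of $(u_2,v_2)=(f_2-f_1^{c_{12}},f_3)$: starting from the MMOT constraint $f_2(x_2)+f_3(x_3)\leqslant c_{12}(x_1,x_2)+c_{23}(x_2,x_3)-f_1(x_1)$, I would take the infimum over $x_1$ on the right-hand side to obtain
\[
f_2(x_2)+f_3(x_3)\leqslant c_{23}(x_2,x_3)+f_1^{c_{12}}(x_2),
\]
which rearranges to $(f_2-f_1^{c_{12}})(x_2)+f_3(x_3)\leqslant c_{23}(x_2,x_3)$, giving the needed $c_{23}$-admissibility. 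Optimality follows by summing the two candidate dual objectives: the $\pm f_1^{c_{12}}$ contributions against $\mu_2$ cancel, leaving exactly $\int f_1\,\mathrm{d}\mu_1+\int f_2\,\mathrm{d}\mu_2+\int f_3\,\mathrm{d}\mu_3$, which is the MMOT dual optimum and hence, by the decomposition identity and strong duality, equals the sum of the two OT primal optima. Since each of the two OT dual values is individually bounded by its primal optimum and they sum to the correct total, equality must hold in each inequality, so both sub-tuples are optimal.

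The proof is essentially a bookkeeping exercise once the decomposition identity and strong duality are invoked, so I do not anticipate a real obstacle. The only delicate step is the ``take inf over $x_1$'' manoeuvre in part (ii), which relies on $f_1^{c_{12}}$ being well-defined and finite on $X_2$; this is guaranteed by continuity of $c_{12}$ on compact $X_1\times X_2$ and integrability of $f_1$ against $\mu_1$, both of which are part of the ambient hypotheses. No regularity beyond what the duality theorem already provides is needed.
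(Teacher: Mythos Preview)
Your proposal is correct and follows essentially the same skeleton as the paper's proof: establish the primal decomposition via the gluing lemma, invoke strong duality on both the MMOT and the two OT subproblems, and then in each direction verify admissibility of the proposed tuple and match dual values. The only noteworthy difference is in part~(ii), where the paper verifies $u_2+v_2\leqslant c_{23}$ by invoking the $c$-conjugacy of the optimal MMOT potential (i.e., $f_2=(f_1+f_3)^c=f_1^{c_{12}}+f_3^{c_{23}}$, so $u_2=f_3^{c_{23}}=v_2^{c_{23}}$), whereas you obtain the same inequality by the more elementary ``take the infimum over $x_1$'' step; your route avoids appealing to the $c$-conjugate structure of the optimal tuple, while the paper's route makes explicit that $u_2$ is in fact the $c_{23}$-transform of $v_2$.
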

\begin{proof}
Given $P\in\P(X_1,X_2,X_3)$, we define $P_{1}(A)=\int_{A\times X_2\times X_3}\mathrm{d}P$ and $P_{1,2}(A\times B)=\int_{A\times B\times X_3}\mathrm{d}P$. On one hand
\begin{align}
    &\inf_{P\in\Gamma(\mu_1,\mu_2,\mu_3)} \int c_{12}(x_1,x_2)+c_{23}(x_2,x_3)\mathrm{d}P\nonumber\\
    =&\inf_{P\in\Gamma(\mu_1,\mu_2,\mu_3)} \int c_{12}(x_1,x_2)\mathrm{d}P_{1,2} +\int c_{23}(x_2,x_3)\mathrm{d}P_{2,3}\nonumber\\
    =&\inf_{Q^1\in\Gamma(\mu_1,\mu_2)} \int c_{12}(x_1,x_2)\mathrm{d}Q^1 +\inf_{Q^2\in\Gamma(\mu_2,\mu_3)}\int c_{23}(x_2,x_3)\mathrm{d}Q^2\nonumber\\
    =&\sup_{u_1+v_1\leqslant c_{12}}\int u_1\mathrm{d}\mu_1 + \int v_1\mathrm{d}\mu_2 + \sup_{u_2+v_2\leqslant c_{23}}\int u_2\mathrm{d}\mu_2 + \int v_2\mathrm{d}\mu_3;\label{eq:uvuv}
\end{align}
On the other hand,
\begin{align}
    \inf_{P\in\Gamma(\mu_1,\mu_2,\mu_3)} \int c_{12}(x_1,x_2)+c_{23}(x_2,x_3)\mathrm{d}P\nonumber\\
    =\sup_{f_1+f_2+f_3\leqslant c}\int f_1\mathrm{d}\mu_1 + \int f_2\mathrm{d}\mu_2 + \int f_3\mathrm{d}\mu_3.\label{eq:f123}
\end{align}
Given a tuple $(u_1,v_1,u_2,v_2)$ that achieves the maximum in \eqref{eq:uvuv}, we define $f_1=u_1, f_2=v_1+u_2, f_3=v_2$, then $$f_1(x_1)+f_2(x_2)+f_3(x_3)=u_1(x_1)+v_1(x_2)+u_2(x_2)+v_2(x_3)\leqslant c_{12}(x_1,x_2)+c_{23}(x_2,x_3)$$
is an admissible solution to \eqref{eq:f123}.
\begin{align*}
    \eqref{eq:uvuv}&=\int u_1\mathrm{d}\mu_1 + \int (v_1+u_2)\mathrm{d}\mu_2 + \int v_2\mathrm{d}\mu_3\\
    &=\int f_1\mathrm{d}\mu_1 + \int f_2\mathrm{d}\mu_2+\int f_3\mathrm{d}\mu_3\\
    &\leqslant \eqref{eq:f123}.
\end{align*}
Since $\eqref{eq:uvuv}=\eqref{eq:f123}$, the tuple $(f_1,f_2,f_3)$ is a maximizer to \eqref{eq:f123}.

Given a tuple $(f_1,f_2,f_3)$ that achieves the maximum in \eqref{eq:f123}, we define $u_1=f_1, v_1=f_1^{c_{12}},u_2=f_2-f_1^{c_{12}},v_2=f_3$.

We first show that $(u_1,v_1,v_2,u_3)$ is an admissible solution to \eqref{eq:uvuv}. By definition, we just need to show that $u_2(x_2)+v_2(x_3)=f_2(x_2)-f_1^{c_{12}}(x_2)+f_3(x_3)\leqslant c_{23}(x_2,x_3)$. By the duality theory, $f_2(x_2)=(f_1+f_3)^{c}=f_1^{c_{12}}(x_2)+f_3^{c_{23}}(x_2)$. Thus
\begin{align*}
    &f_1^{c_{12}}(x_2)+f_3^{c_{23}}(x_2)=f_2(x_2)=v_1(x_2)+u_2(x_2)\\
    \implies &u_2(x_2)=f_3^{c_{23}}(x_2)\\
    \implies &u_2(x_2)=v_2^{c_{23}}(x_2),
\end{align*}
thus $u_2(x_2)+v_2(x_3)\leqslant c_{23}(x_2,x_3)$. As a result, $(u_1,v_1,v_2,u_3)$ is an admissible solution to \eqref{eq:uvuv}. Analogously the above, it is the maximizer to \eqref{eq:uvuv} as well.
\end{proof}

\section{Supplement to \Cref{subsec:bary}}
\begin{theorem}[\cite{Agueh2011Barycenters}]\label{thm:bary}
For any $m$-tuple $(x_1,\cdots,x_m)\in (\R^d)^m$ and weights $(\lambda_1,\cdots,\lambda_m)$ such that $\sum_{i=1}^m \lambda_i=1$, let us define the (Euclidean) barycenter map $T:(\R^d)^m\mapsto \R^d$:
\begin{equation*}
    T(x_1,\cdots,x_m)=\sum_{i=1}^m \lambda_i x_i.
\end{equation*}
The optimal solution $P$ to the MMOT of Gangbo-\'{S}wi\c{e}ch type cost 
\begin{equation}
    \label{eq:GS_MMOT}
    \inf_{P\in\Gamma(\mu_1,\cdots,\mu_m)} \int_{(\R^d)^m} \left(\sum_{1\leqslant i<j\leqslant m}\frac{\lambda_i \lambda_j}{2}\abs{x_i-x_j}^2\right)\mathrm{d}P(x_1,\cdots,x_m)
\end{equation}
induces the barycenter $\mu$ to \eqref{eq:bary} by 
\begin{equation*}
\begin{aligned}
    \mu &= \push{T}{P}= \push{\sum_{j=1}^m \lambda_j T_i^1}{\mu_1};;\notag\\
        &= \push{\id-\frac{1}{\lambda_i}\nabla f_i}{\mu_i}.\notag
\end{aligned}
\end{equation*}
where $(f_i)$ are dual variables to \eqref{eq:GS_MMOT}, and for $x=(x_1,\cdots,x_m)$ $P$-almost everywhere, 
\[x_i = T_i^1(x_1)\defeq\nabla \left(\frac{1}{2}\abs{\cdot}^2 - \frac{f_i}{\lambda_i}\right)^* \circ \nabla\left(\frac{1}{2}\abs{\cdot}^2- \frac{f_1}{\lambda_1}\right)(x_1).\]

\end{theorem}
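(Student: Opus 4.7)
The plan rests on one elementary algebraic identity: for $y \in \R^d$ and weights summing to one,
\[
\sum_{i=1}^m \lambda_i |x_i - y|^2 \;=\; \sum_{1\leqslant i<j\leqslant m}\lambda_i\lambda_j |x_i-x_j|^2 \;+\; |y - T(x_1,\ldots,x_m)|^2,
\]
which is verified by expanding both sides using $\sum_i \lambda_i = 1$. Specializing $y = T(x)$ gives the pointwise relation $c(x_1,\ldots,x_m) = \tfrac{1}{2}\sum_i \lambda_i |x_i - T(x)|^2$, and this is the minimum of the right-hand side as a function of $y$. I would make this identity the backbone of the whole proof.

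Next I would establish the primal equivalence between the MMOT \eqref{eq:GS_MMOT} and the barycenter problem \eqref{eq:bary}, and simultaneously show $\mu = \push{T}{P}$. For the inequality $\text{MMOT} \geqslant \inf_\mu \sum_i (\lambda_i/2)W_2^2(\mu_i,\mu)$: given any admissible $P\in\Gamma(\mu_1,\ldots,\mu_m)$, the pushforward $\mu_P \defeq \push{T}{P}$ is a candidate barycenter, and $\push{(\pi_i,T)}{P} \in \Gamma(\mu_i,\mu_P)$ is an admissible coupling, so $W_2^2(\mu_i,\mu_P) \leqslant \int |x_i - T(x)|^2 \mathrm{d}P$; summing with weights $\lambda_i/2$ and applying the identity yields $\sum_i (\lambda_i/2) W_2^2(\mu_i,\mu_P) \leqslant \int c\, \mathrm{d}P$. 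For the reverse inequality: given any $\mu$, pick optimal two-marginal couplings $\pi_i\in\Gamma(\mu_i,\mu)$, glue them along the common marginal $\mu$ via \Cref{lem:gluing} to produce $Q\in\P(X_1\times\cdots\times X_m\times X)$ with the correct marginals, and let $P$ be its projection on the first $m$ factors. The identity applied with $y$ equal to the last coordinate gives $c(x_1,\ldots,x_m) \leqslant \sum_i (\lambda_i/2) |x_i - y|^2$, and integrating against $Q$ yields $\int c\, \mathrm{d}P \leqslant \sum_i (\lambda_i/2) W_2^2(\mu_i,\mu)$. Taking the optimal $P^\ast$ shows that $\mu = \push{T}{P^\ast}$ realizes the barycenter infimum.

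The third step extracts the map characterization $\mu = \push{\id - \tfrac{1}{\lambda_i}\nabla f_i}{\mu_i}$ from the Kantorovich potentials. By the strong duality summarized in \Cref{subsec:mmot-dual}, the optimal dual tuple $(f_1,\ldots,f_m)$ satisfies $\sum_i f_i(x_i) = c(x_1,\ldots,x_m)$ $P$-a.e., while $\sum_i f_i(y_i) \leqslant c(y_1,\ldots,y_m)$ pointwise. Fixing all but the $i$-th coordinate, the function $y_i \mapsto c(x_1,\ldots,y_i,\ldots,x_m) - f_i(y_i)$ attains its minimum at $y_i = x_i$, giving the first-order condition
\[
\nabla f_i(x_i) \;=\; \nabla_{x_i} c(x_1,\ldots,x_m) \;=\; \lambda_i\sum_{j\neq i}\lambda_j(x_i - x_j) \;=\; \lambda_i\bigl(x_i - T(x)\bigr),
\]
for $P$-a.e.\ $(x_1,\ldots,x_m)$ (using $\sum_j \lambda_j = 1$). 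Rearranging gives $T(x) = x_i - \lambda_i^{-1}\nabla f_i(x_i)$ on $\supp P$, and since $\push{\pi_i}{P} = \mu_i$, the pushforward identity $\mu = \push{T}{P} = \push{\id - \lambda_i^{-1}\nabla f_i}{\mu_i}$ follows for every $i$.

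Finally, for the $T_i^1$ representation, introduce $\phi_i(x)\defeq \tfrac{1}{2}|x|^2 - f_i(x)/\lambda_i$. By \Cref{lem:trans}(iii), $c$-concavity of $f_i$ (with respect to the pairwise quadratic cost, up to rescaling) is equivalent to convexity and lower semicontinuity of $\phi_i$, so $\nabla \phi_i^*$ inverts $\nabla\phi_i$ in the appropriate Legendre sense. The identity from step three reads $\nabla\phi_i(x_i) = T(x)$ for every $i$, $P$-a.e., so $x_i = \nabla\phi_i^\ast(T(x))$. In particular, taking $i=1$ yields $T(x) = \nabla\phi_1(x_1)$, and substituting back gives $x_i = \nabla\phi_i^\ast\circ\nabla\phi_1(x_1) = T_i^1(x_1)$, which together with $T_1^1 = \id$ and $T(x)=\sum_j\lambda_j x_j$ produces $\mu = \push{T}{P} = \push{\sum_j \lambda_j T_j^1}{\mu_1}$. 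The main technical obstacle is justifying the pointwise first-order computation for $\nabla f_i$ and the convexity of $\phi_i$; this typically requires absolute continuity of $\mu_i$ plus regularity of the Kantorovich potentials, which is why an appeal to existing multi-marginal theory (e.g., Gangbo--\'{S}wi\c{e}ch and the Agueh--Carlier paper) or to \Cref{thm:trans} is needed to close the argument rigorously.
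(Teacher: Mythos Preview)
Your proposal is correct and shares the paper's core mechanism: both derive the first-order optimality condition $\nabla f_i(x_i) = \lambda_i(x_i - T(x))$ from strong duality, rewrite it as $\nabla\phi_i(x_i) = T(x)$ for $\phi_i = \tfrac{1}{2}|\cdot|^2 - f_i/\lambda_i$, and invert via the Legendre transform to obtain $x_i = T_i^1(x_1)$.

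The main differences are in scope and presentation. The paper's proof is essentially a change-of-variables computation: it introduces the auxiliary potentials $g_i(x_i) = \tfrac{\lambda_i(1-\lambda_i)}{2}|x_i|^2 - f_i(x_i)$ used by Agueh--Carlier, records the optimality condition $\nabla g_i(x_i) = \lambda_i\sum_{j\neq i}\lambda_j x_j$ in those variables, and then translates back to the $f_i$ variables; the equivalence between the MMOT and the barycenter problem \eqref{eq:bary} is simply cited. You instead work directly with the $f_i$, skip the $g_i$ detour, and supply a self-contained proof of the MMOT--barycenter equivalence via the algebraic identity $\sum_i \lambda_i|x_i-y|^2 = \sum_{i<j}\lambda_i\lambda_j|x_i-x_j|^2 + |y-T(x)|^2$ together with the gluing lemma. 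Your route is more self-contained and makes the role of $\mu = T_\# P$ transparent; the paper's route is shorter because it leans on the cited references for everything except the translation between dual variable conventions.
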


\begin{proof}
The proof is due to \cite{Gangbo1998Multidimensional}. We follow the discussion in \cite{Agueh2011Barycenters} but in terms of the dual variables $(f_i)$, rather than the variables $g_i(x_i)=\frac{\lambda_i (1-\lambda_i)}{2}\abs{x_i}^2-f_i(x_i)$ used in the convex analysis. More precisely, \cite{Agueh2011Barycenters} consider the primal and dual problems:
\begin{subequations}
    \label{eq:bary_in_convex}
    \begin{align}
        &\sup \int \left(\sum_{1\leqslant i<j \leqslant m} \lambda_i\lambda_j x_i x_j\right)\mathrm{d}P;\\
        &\inf \sum_{i=1}^m \int g_i\mathrm{d}\mu_i\qquad \textrm{~subject to~} \sum_{i=1}^m g_i \geqslant \sum_{1\leqslant i<j\leqslant m}\lambda_i \lambda_j x_i x_j.
    \end{align}
\end{subequations}
We considered the following instead:
\begin{subequations}
    \label{eq:bary_in_dual}
    \begin{align}
        &\inf \int \sum_{1\leqslant i<j \leqslant m} \frac{\lambda_i\lambda_j}{2}\abs{x_i-x_j}^2\mathrm{d}P;\\
        &\sup \sum_{i=1}^m \int f_i\mathrm{d}\mu_i\qquad \textrm{~subject to~} \sum_{i=1}^m f_i \leqslant \sum_{1\leqslant i<j\leqslant m}\frac{\lambda_i \lambda_j}{2}\abs{x_i-x_j}^2.
    \end{align}
\end{subequations}
These two sets of problems are equivalent under the change of variables:
\begin{equation*}
    g_i(x_i) = \frac{\lambda_i (1-\lambda_i)}{2}\abs{x_i}^2 - f_i(x_i).
\end{equation*}
The optimal condition to \eqref{eq:bary_in_convex} is for $P$-a.e. $x=(x_1,\cdots,x_m)$
\begin{align*}
    &\nabla g_i(x_i)=\lambda_i \sum_{j\neq i}\lambda_j x_j\\
    \Longleftrightarrow & \nabla \left(\frac{\lambda_i}{2}\abs{\cdot}^2 + \frac{g_i}{\lambda_i}\right)(x_i)=\sum_{j=1}^m \lambda_j x_j =\nabla \left(\frac{\lambda_1}{2}\abs{\cdot}^2 + \frac{g_1}{\lambda_1}\right)(x_1)\\
    \Longleftrightarrow & x_i = \nabla \left(\frac{\lambda_i }{2}\abs{\cdot}^2+\frac{g_i}{\lambda_i}\right)^*\circ \nabla \left(\frac{\lambda_1}{2}\abs{\cdot}^2+\frac{g_1}{\lambda_1}\right)(x_1)\defeq T_i^1(x_1).
\end{align*}
By the change of variables, the optimal condition to \eqref{eq:bary_in_dual} is for $P$-a.e. $x=(x_1,\cdots,x_m)$
\begin{align*}
    &\nabla f_i(x_i) = \lambda_i (1-\lambda_i)x_i - \lambda_i \sum_{j\neq i}\lambda_j x_j = \lambda_i (x_i - \sum_j \lambda_j x_j)\\
    \Longleftrightarrow &\nabla \left(\frac{1}{2}\abs{\cdot}^2-\frac{f_i}{\lambda_i}\right)(x_i) = \sum_j \lambda_j x_j = \nabla \left(\frac{1}{2}\abs{\cdot}^2-\frac{f_1}{\lambda_1}\right)(x_1)\\
    \Longleftrightarrow &x_i = \nabla \left(\frac{1}{2}\abs{\cdot}^2 - \frac{f_i}{\lambda_i}\right)^* \circ \nabla\left(\frac{1}{2}\abs{\cdot}^2- \frac{f_1}{\lambda_1}\right)(x_1)=T_i^1 (x_1).
\end{align*}
\end{proof}


\section*{Acknowledgments}
We would like to thank Anne Gelb, Yoonsang Lee, Doug Cochran, Xianfeng David Gu and James Ronan for many fruitful conversations.  This work was funded in part by US Office of Naval Research MURI grant N00014-20-1-2595. 

\bibliographystyle{siamplain}
\bibliography{references}
\end{document}